\tikzset{tab/.style={matrix of math nodes,column sep=-.4, row sep=-.4,text height=8pt,text width=8pt,align=center,inner sep=3}}
\newcommand{\BB}{\mathcal{B}}
\newcommand{\bon}{\overline{1}}
\newcommand{\btw}{\overline{2}}
\newcommand{\bth}{\overline{3}}
\newcommand{\bi}{\overline\imath}
\newcommand{\bk}{\overline{k}}
\newcommand{\brr}{\overline{r}}
\newcommand{\CB}{\mathbf{B}}
\newcommand{\cc}{{\bm c}}
\newcommand{\CC}{\field{C}}
\newcommand{\cd}{\raisebox{.25ex}{$\cdot\!\cdot\!\cdot$}}
\newcommand{\field}[1]{\mathbf{#1}}
\newcommand{\ii}{{\bf i}}
\newcommand{\nz}{\operatorname{nz}}
\newcommand{\PP}{\mathcal{P}}
\newcommand{\pr}{\mathrm{pr}}
\newcommand{\QQ}{\field{Q}}
\newcommand{\R}{\mathscr{R}}
\newcommand{\RR}{\field{R}}
\newcommand{\seg}{\operatorname{seg}}
\newcommand{\TT}{\mathcal{T}}
\newcommand{\wt}{{\rm wt}}
\newcommand{\zz}{{\bm z}}
\newcommand{\ZZ}{\field{Z}}
\theoremstyle{plain}
\newtheorem{theorem}{Theorem}[section]
\newtheorem{proposition}[theorem]{Proposition}
\newtheorem{corollary}[theorem]{Corollary}
\theoremstyle{definition}
\newtheorem{definition}[theorem]{Definition}
\newtheorem{ex}[theorem]{Example}
\newtheorem{remark}[theorem]{Remark}
\numberwithin{equation}{section}
\numberwithin{figure}{section}
\numberwithin{table}{section}
\newenvironment{acknowledgements}
{\bigskip\noindent\footnotesize\textbf{Acknowldegements.} }
{\medskip}
\title[Tableaux, canonical bases, and the Gindikin-Karpelevich formula]{Young tableaux, canonical bases, and the Gindikin-Karpelevich formula}
\author{Kyu-Hwan Lee}
\address{Department of Mathematics \\ University of Connecticut \\ Storrs, CT  06269-3009}
\email{khlee@math.uconn.edu}
\urladdr{http://www.math.uconn.edu/~khlee}
\author{Ben Salisbury}
\address{Department of Mathematics \\ The City College of New York \\ New York, NY 10031 \\ and The Institute for Computational and Experimental Research in Mathematics \\ Brown University \\ Providence, RI 02903}
\email{benjamin$\underline{\phantom{s}}$salisbury@brown.edu}
\urladdr{http://www.sci.ccny.cuny.edu/~salisbury}
\keywords{Gindikin-Karpelevich, Kostant partition, Young tableaux, canonical basis, Lusztig parametrization, crystal}
\date{\today}
\subjclass[2010]{Primary 17B37; Secondary 05E10}
\begin{document}
\begin{abstract}
A combinatorial description of the crystal $\mathcal{B}(\infty)$ for finite-dimen\-sional simple Lie algebras in terms of certain Young tableaux was developed by J.\ Hong and H.\ Lee.  We establish an explicit bijection between these Young tableaux and canonical bases indexed by Lusztig's parametrization, and obtain a combinatorial rule for expressing the Gindikin-Karpelevich formula as a sum over the set of Young tableaux.
\end{abstract}
\maketitle

\setcounter{section}{-1}
\section{Introduction}

The Gindikin-Karpelevich formula is a $p$-adic integration formula proved by Langlands in \cite{Lan:71}.  He named it the {\it Gindikin-Karpelevich formula} after a similar formula originally stated by Gindikin and Karpelevich \cite{GK:62} in the case of real reductive groups. The formula also appears in Macdonald's work \cite{Mac:71} on $p$-adic groups and affine Hecke algebras.

Let $G$ be a split semisimple algebraic group over a $p$-adic field $F$  with ring of integers $\mathfrak{o}_F$, and suppose the residue field $\mathfrak{o}_F/\pi\mathfrak{o}_F$ of $F$ has size $t$, where $\pi$ is a generator of the unique maximal ideal in $\mathfrak{o}_F$.  Choose a maximal torus $T$ of $G$ contained in a Borel subgroup $B$ with unipotent radical $N$, and let $N_-$ be the opposite group to $N$. We have $B=TN$.  The group $G(F)$ has a decomposition $G(F) = B(F)K$, where $K = G(\mathfrak{o}_F)$ is a maximal compact subgroup of $G(F)$.  Fix an unramified character $\tau\colon T(F) \longrightarrow \CC^\times$, and define a function $f^\circ \colon G(F) \longrightarrow \CC$ by 
\[
f^\circ (bk) = (\delta^{1/2}\tau)(b), \ \ \ b\in B(F), \ k\in K,
\]
where $\delta\colon B(F) \longrightarrow \RR^\times_{>0}$ is the modular character of $B$ and $\tau$ is extended to $B(F)$ to be trivial on $N(F)$.   The function $f^\circ$ is called the {\it standard spherical vector} corresponding to $\tau$.  

Let $G^\vee$ be the Langlands dual of $G$ with the dual torus $T^\vee$.  The set of coroots of $G$ is identified with the set of roots of $G^\vee$ and will be denoted by $\Phi$. Finally, let $\zz$ be the element of the dual torus $T^\vee$, corresponding to $\tau$ via the Satake isomorphism.

\begin{theorem}[Gindikin-Karpelevich formula, \cite{Lan:71}]
Given the setting above, we have
\begin{equation}\label{eq:GK}
\int_{N_-(F)} f^\circ(n) \,{\rm d}n =
\prod_{\alpha\in\Phi^+} \frac{1-t^{-1}\zz^\alpha}{1-\zz^\alpha},
\end{equation}
where ${\Phi}^+$ is the set of positive roots of $G^\vee$.
\end{theorem}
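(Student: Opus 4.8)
The plan is to realize the left-hand side of \eqref{eq:GK} as the value at the identity of the standard long intertwining operator attached to the longest element $w_0$ of the Weyl group, and then to factor that operator into rank-one pieces along a reduced decomposition of $w_0$. First I would fix a reduced word $w_0 = s_{i_1}\cdots s_{i_N}$, where $N = |\Phi^+|$, which produces the convex ordering $\beta_k = s_{i_1}\cdots s_{i_{k-1}}(\alpha_{i_k})$, $1\le k\le N$, of the positive roots together with the matching factorization $N_-(F) = \prod_{k=1}^N U_{-\beta_k}(F)$ into one-parameter root subgroups. The integral $\int_{N_-(F)} f^\circ(n)\,{\rm d}n$ is then, up to the change of variables $n\mapsto \dot w_0\, n\, \dot w_0^{-1}$ furnished by a representative $\dot w_0\in K$, exactly $M(w_0,\tau)f^\circ$ evaluated at the identity, where $M(w,\tau)f(g) = \int_{N(F)\cap\, \dot w N_-(F)\dot w^{-1}} f(\dot w^{-1} n g)\,{\rm d}n$ and right $K$-invariance of $f^\circ$ is used to drop the trailing $\dot w_0^{-1}$.

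The central tool is the cocycle identity: whenever $\ell(ww') = \ell(w)+\ell(w')$ one has $M(ww',\tau) = M(w,\,w'\tau)\circ M(w',\tau)$, which follows from the length-additive factorization of the relevant unipotent subgroup together with Fubini's theorem. Applying this repeatedly along the reduced word gives $M(w_0,\tau) = M(s_{i_1},\sigma_1)\circ\cdots\circ M(s_{i_N},\sigma_N)$, where $\sigma_k = s_{i_{k+1}}\cdots s_{i_N}\tau$ is the partially twisted character. The entire computation thus reduces to evaluating a single rank-one operator $M(s_i,\sigma)$ on the spherical vector.

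For the rank-one step I would restrict to the rank-one subgroup $G_{\alpha_i}$ generated by the $\pm\alpha_i$ root groups and compute $\int_F f^\circ\!\big(\dot s_i^{-1}\, u_{\alpha_i}(x)\big)\,{\rm d}x$ directly from the Iwasawa decomposition: for $|x|\le 1$ the argument already lies in $K$, so the integrand is $1$, while for $|x|>1$ one extracts the torus part and reads off $(\delta^{1/2}\tau)$. Summing the resulting geometric series over the shells $|x|=t^{\,m}$, $m\ge 1$, yields the closed form $M(s_i,\sigma)f^\circ = \dfrac{1 - t^{-1}\sigma^{\alpha_i}}{1 - \sigma^{\alpha_i}}\,f^\circ$, with $f^\circ$ now the spherical vector for $s_i\sigma$. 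Using the reduced word one checks that the partially twisted character seen by the $k$-th factor pairs with $\alpha_{i_k}$ to give exactly $\zz^{\beta_k}$, so that factor contributes $\dfrac{1 - t^{-1}\zz^{\beta_k}}{1 - \zz^{\beta_k}}$; as $\{\beta_1,\dots,\beta_N\}$ runs over all of $\Phi^+$, chaining the factors produces the product on the right-hand side of \eqref{eq:GK}.

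I expect the main obstacle to be the rank-one Iwasawa computation together with the careful bookkeeping of how $\tau$ (equivalently $\zz$) is twisted as one passes through the composition: one must verify that the measures normalized on each $U_{-\beta_k}(F)$ multiply correctly under the reduced-word factorization, and that the pairing identity $\sigma_k^{\alpha_{i_k}} = \zz^{\beta_k}$ holds on the nose, so that no spurious powers of $t$ appear. A secondary technical point is convergence: the geometric series and the iterated integrals converge only for $\zz$ in the cone where $|\zz^\alpha|<1$ for every $\alpha\in\Phi^+$, so the stated identity is first established there and then extended to all of $T^\vee$ by rational (analytic) continuation.
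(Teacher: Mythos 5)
The paper does not actually prove this statement---it is quoted as a classical theorem with a citation to Langlands \cite{Lan:71}, and the body of the paper only uses the right-hand side---so the relevant comparison is with the cited classical argument, which your proposal reproduces correctly: identifying the integral with the long intertwining operator $M(w_\circ,\tau)$ applied to the spherical vector, factoring via the cocycle relation $M(ww',\tau)=M(w,w'\tau)\circ M(w',\tau)$ along a reduced word for $w_\circ$, evaluating each rank-one factor by the Iwasawa decomposition and a geometric series over the shells $|x|=t^m$ to get $\bigl(1-t^{-1}\sigma^{\alpha_i}\bigr)/\bigl(1-\sigma^{\alpha_i}\bigr)$, and extending from the cone of convergence by meromorphic continuation. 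This is essentially the same route as the source the paper cites (Langlands' monograph, and Casselman's treatment of the unramified principal series), and the two technical points you flag---measure compatibility under the root-subgroup factorization and the identity $\sigma_k^{\alpha_{i_k}}=\zz^{\beta_k}$ up to the choice of enumeration of $\Phi^+$---are exactly the ones that need checking and do go through.
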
  

Let $\mathfrak g$ be the Lie algebra of $G^\vee$, and let $\BB(\infty)$ be the crystal basis of the negative part $U_q^-(\mathfrak g)$ of the quantum group $U_q(\mathfrak g)$. Then $\Phi$ is the root system of $\mathfrak g$ as well. In recent work, the integral in the Gindikin-Karpelevich formula has been evaluated using Kashiwara's crystal basis or Lusztig's canonical basis.  
 D.\ Bump and M.\ Nakasuji \cite{BN:10} used {\it decorated string parameterizations} in the crystal $\BB(\infty)$, which are essentially paths to the highest weight vector, while in \cite{McN:11}, P.\ McNamara used a cellular decomposition of $N_-$ in bijection with Lusztig's canonical basis $\CB$ of $U_q^-(\mathfrak g)$ and computed the integral over the cells.  Both of these methods are valid for type $A_r$.

In the general case, H.\ Kim and K.-H.\ Lee \cite{KL:11} used {\it Lusztig's parameterization} of elements in $\CB$ and proved, for all finite-dimensional simple Lie algebras $\mathfrak g$, 
\begin{equation} \label{eqn-aa}
\prod_{\alpha\in\Phi^+} \frac{1-t^{-1}\zz^\alpha}{1-\zz^\alpha} = \sum_{b\in \CB} (1-t^{-1})^{\nz(\phi_\ii(b))}\zz^{-\wt(b)},
\end{equation}
where $\nz(\phi_\ii(b))$ is the number of nonzero entries in the Lusztig parametrization $\phi_\ii(b)$ of $b$ with respect to a reduced expression $\ii$ of the longest Weyl group element.

The purpose of this paper is to describe the sum in \eqref{eqn-aa} in a combinatorial way using Young tableaux.  
Since the canonical basis $\CB$ is the same as Kashiwara's global crystal basis, we may replace $\CB$ with $\BB(\infty)$.  The associated crystal structure on $\CB$ may be described in terms of the Lusztig parametrization \cite{BZ:01,Luszt:93} or the string parametrization \cite{BZ:96,Kash:93,Lit:98}, and there are formulas relating the two given by Berenstein and Zelevinsky in \cite{BZ:01}. 
Much work has been done on realizations of crystals (e.g., \cite{Kam:10,Kang:03,KN:94,KS:97,Lit:95}). In the case of $\BB(\infty)$ for finite-dimensional simple Lie algebras, J.\ Hong and H.\ Lee used {\it marginally large} semistandard Young tableaux to obtain a realization of crystals \cite{HL:08}.  We will use their marginally large semistandard Young tableaux realization of $\BB(\infty)$ to write the right-hand side of \eqref{eqn-aa} as a sum over a set $\TT(\infty)$ of tableaux.  It turns out that the appropriate data to define the coefficient comes from a consecutive string of letters $k$ in the tableaux, which we call a {\em $k$-segment}. Define $\seg(T)$ to be the total number of $k$-segments in a tableau $T$ for types $A_r$ and $C_r$. For other types, see  Definition \ref{def-seg} (2). Our result is the following.

\begin{theorem}\label{thm:main}
Let $\mathfrak{g}$ be a Lie algebra of type $A_r$, $B_r$, $C_r$, $D_r$, or $G_2$.  Then
\begin{equation} \label{eqn-first}
\prod_{\alpha\in\Phi^+} \frac{1-t^{-1}\zz^\alpha}{1-\zz^\alpha} = \sum_{T\in \TT(\infty) } (1-t^{-1})^{\seg(T)}\zz^{-\wt(T)}.
\end{equation}
\end{theorem}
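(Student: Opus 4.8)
The plan is to deduce \eqref{eqn-first} from the Kim--Lee evaluation \eqref{eqn-aa} by matching the two sums term by term. Since the canonical basis $\CB$ is identified with Kashiwara's crystal $\BB(\infty)$, and the Hong--Lee construction \cite{HL:08} provides a crystal isomorphism $\Psi\colon \TT(\infty) \xrightarrow{\ \sim\ } \BB(\infty) = \CB$, the set $\TT(\infty)$ is merely a relabeling of the index set appearing on the right-hand side of \eqref{eqn-aa}. Thus it suffices to show that, for every $T \in \TT(\infty)$ with corresponding basis vector $b = \Psi(T)$, the two summands agree, i.e.\ that
\begin{equation*}
(1-t^{-1})^{\nz(\phi_\ii(b))}\zz^{-\wt(b)} = (1-t^{-1})^{\seg(T)}\zz^{-\wt(T)}
\end{equation*}
for a suitably chosen reduced word $\ii$ of the longest element $w_0$.

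The weight factor requires no work: because $\Psi$ is a morphism of crystals it intertwines the weight functions, so $\wt(b) = \wt(T)$ and the monomials $\zz^{-\wt(b)}$ and $\zz^{-\wt(T)}$ coincide. All of the content therefore lies in the exponent of $(1-t^{-1})$, and the theorem reduces to the single combinatorial identity
\begin{equation}\label{eqn-key}
\nz(\phi_\ii(b)) = \seg(T), \qquad T = \Psi^{-1}(b),
\end{equation}
for an appropriate $\ii$ depending only on the type of $\mathfrak g$.

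To prove \eqref{eqn-key} I would first fix, in each type, a convenient reduced word $\ii$ for $w_0$ --- one adapted to the row-reading of semistandard tableaux, so that the Kashiwara operators implementing the Lusztig datum act on the tableau in a predictable, local fashion. With such an $\ii$ in hand, the strategy is to make the bijection $\Psi$ explicit, as promised in the abstract: starting from the marginally large tableau $T$, I would read off the entries that deviate from the generic marginally large filling and record how each deviation is produced by a string of lowering operators, thereby computing the coordinates of $\phi_\ii(b)$ directly from the combinatorial data of $T$. The crucial observation to establish is that a coordinate of $\phi_\ii(b)$ is nonzero precisely when it corresponds to a maximal consecutive block of equal letters $k$ in some row of $T$ --- that is, to a $k$-segment --- so that nonzero coordinates and $k$-segments are in explicit bijection and \eqref{eqn-key} follows.

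The main obstacle is the verification of \eqref{eqn-key}, which I expect to be genuinely type-dependent rather than uniform. For types $A_r$ and $C_r$ the notion of $k$-segment is the naive one, and the correspondence with nonzero Lusztig coordinates should be relatively transparent once the reduced word is chosen. The remaining types $B_r$, $D_r$, and $G_2$ are where the difficulty concentrates: the tableaux carry barred and special entries, and the appropriate definition of $\seg(T)$ is the more subtle one recorded in Definition~\ref{def-seg}(2), so one must check by a careful case analysis --- over the shapes of columns and the possible configurations of special letters --- that the refined segment count still matches the number of nonzero Lusztig exponents. Handling these folded and exceptional types, and confirming that the chosen reduced word produces exactly the segment statistic in each, is the step I anticipate will require the most care.
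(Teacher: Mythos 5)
The step that fails is your key identity: there is \emph{no} reduced word $\ii$ for which the Hong--Lee crystal isomorphism $\Psi\colon\TT(\infty)\to\CB$ satisfies $\nz(\phi_\ii(\Psi(T)))=\seg(T)$ for all $T$. It already fails in type $G_2$ (and, by the same computation, in $B_2=C_2$), where $R(w_\circ)$ has only two elements, so ``suitably chosen $\ii$'' cannot save it. Recall the standard compatibility between $\phi_\ii$ and the crystal structure: if $\ii$ begins with $i_1$, then $f_\ii^\cc=f_{i_1}^{(c_1)}y$ with $y$ killed by Kashiwara's derivation, so $\widetilde{e}_{i_1},\widetilde{f}_{i_1}$ act on the first coordinate and $c_1=\varepsilon_{i_1}(b)$ (see, e.g., \cite{Luszt:93,BZ:01}). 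Now take $T=\widetilde{f}_1\widetilde{f}_2\widetilde{f}_1T_\infty$ in type $G_2$: by Figure \ref{fig:G2} this is the tableau $\young(0,*)$, so $\seg(T)=1$ (Figure \ref{fig:G2t} records its coefficient as $1-t^{-1}$). Since $\Psi$ is a crystal morphism, $b=\Psi(T)$ satisfies $\widetilde{e}_1 b=\Psi(\young(3,*))\neq0$ and $\widetilde{e}_1^{\,2}b=0$ (Figure \ref{fig:G2} shows $\young(3,*)$ has no incoming $1$-arrow), so for $\ii=(1,2,1,2,1,2)$ we get $c_1=\varepsilon_1(b)=1$; as $\wt(b)=-(2\alpha_1+\alpha_2)$ and the roots in this order are $\alpha_1,\,3\alpha_1{+}\alpha_2,\,2\alpha_1{+}\alpha_2,\,3\alpha_1{+}2\alpha_2,\,\alpha_1{+}\alpha_2,\,\alpha_2$, the only possible datum is $\phi_\ii(b)=(1,0,0,0,1,0)$, whence $\nz(\phi_\ii(b))=2\neq1=\seg(T)$. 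For the only other word $\ii'=(2,1,2,1,2,1)$ the failure occurs one step earlier: $T'=\widetilde{f}_2\widetilde{f}_1T_\infty=\young(3,*)$ has $\seg(T')=1$, but $\varepsilon_2(\Psi(T'))=1$ forces $\phi_{\ii'}(\Psi(T'))=(1,0,0,0,0,1)$ and $\nz=2$. So under the crystal isomorphism the summands do \emph{not} agree term by term; only their sums over each weight space do, and that equidistribution is exactly what must be proved by other means.

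This is precisely the trap the paper flags in the remark following Corollary \ref{cor:last}: the bijection that does carry $\seg$ to $\nz$, namely $\Theta_\ii=\Psi_\ii^{-1}\circ\Xi$ of diagram \eqref{diagram}, is \emph{not} a crystal isomorphism. Accordingly the paper never matches $T$ with $\Psi(T)$; it routes both sides through Kostant partitions instead. Proposition \ref{prop:bijsegroot} gives a (non-crystal) bijection $\Xi\colon\TT(\infty)\to\R^+$ under which $\seg(T)$ becomes the number of distinct parts, and $\Psi_\ii$ identifies $\R^+$ with $\CB$ so that distinct parts become nonzero Lusztig coordinates; the composite preserves weights and the statistics by construction, and since each summand in \eqref{eqn-aa} depends only on $\wt$ and the statistic, an arbitrary such bijection suffices---no compatibility with $\widetilde{e}_i,\widetilde{f}_i$ is needed, which is fortunate because none is available. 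Your instinct that the crystal isomorphism itself should do the job is borne out only in type $A_r$ (essentially the route of \cite{LS-A}, using the known type $A$ compatibilities \cite{MG:08,Sav:06}); in the non-simply-laced types above the pointwise matching you reduce to is false, so the proposal cannot be repaired by a more careful case analysis---it requires replacing $\Psi$ by a statistics-compatible bijection such as the paper's $\Theta_\ii$.
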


The point is that the exponent $\seg(T)$ can be read off immediately from the tableau $T$.  In \cite{LS-A}, the authors achieved this result when $\mathfrak{g}$ is of type $A_r$, where the method of proof first recovers the string parametrization of a tableau from the lengths of $k$-segments. In this paper, we will adopt a different approach.  We construct a bijection from $\TT(\infty)$ to the set of Kostant partitions and use the natural bijection from the set of Kostant partitions to Lusztig's canonical basis $\CB$ (see the diagram in \eqref{diagram}).  In this way, we relate a $k$-segment of a tableau $T$ with a particular positive root up to some necessary modifications.  This idea is similar to the approach used by the authors together with S.-J.\ Kang and H.\ Ryu in the type $A_r^{(1)}$ case \cite{KLRS-A}.

There is a companion formula to the Gindikin-Karpelevich formula, called the Cassel\-man-Shalika formula, which may be viewed as the highest weight crystal analogue of our work here.  The corresponding type $A_r$ result to this work for the Casselman-Shalika may be found in \cite{LLS-A}.  It is also worth noting that there are well-known bijections between the Lusztig parametrization, string parametrization, and semistandard Young tableaux in type $A_r$.  More details may be found in \cite{MG:08,Sav:06}.
 
The outline of this paper is as follows.  In Section \ref{sec:crystal}, we set our basic notation and review the notion of a combinatorial crystal and its properties.  In Section \ref{sec:binf}, we recall the description of $\BB(\infty)$ crystal given by marginally large semistandard Young tableaux according to J.\ Hong and H.\ Lee.  The definition of $\seg(T)$ and the proof of Theorem \ref{thm:main} will be presented in Section \ref{sec:main}.   Section \ref{sec:app} gives some applications to the study of symmetric functions.

\begin{acknowledgements}
B.\;S.\ would like to thank Gautam Chinta for his support during a portion of this work.  He would also like to thank Travis Scrimshaw for his help during the development of the $\TT(\infty)$ implementation in Sage.  This latter development was completed while both authors were visiting ICERM during the Spring 2013 semester program entitled ``Automorphic Forms, Combinatorial Representation Theory, and Multiple Dirichlet Series.''
\end{acknowledgements}

\section{General definitions}\label{sec:crystal}

Let $I$ be a finite index set and let $\mathfrak{g}$ be a finite-dimensional simple complex Lie algebra of rank $r:= \#I \ge 1$ with simple roots $\{ \alpha_i : i\in I\}$ and Cartan matrix $A = (a_{ij})_{i,j\in I}$.  We denote the generators of $\mathfrak{g}$ by $e_i$, $f_i$, and $h_i$, for $i\in I$.  Let $P = \bigoplus_{i\in I}\ZZ\omega_i$ and $P^+ = \bigoplus_{i\in I}\ZZ_{\ge0}\omega_i$ be the weight lattice and dominant integral weight lattice, respectively, where $\omega_i$ $(i\in I) $ are the fundamental weights of $\mathfrak{g}$.    Let $\{h_i : i\in I\}$ denote the set of coroots of $\mathfrak{g}$, and recall the pairing $\langle \ ,\ \rangle \colon P^\vee \times P \longrightarrow \ZZ$ by $\langle h,\lambda \rangle = \lambda(h)$ with the condition that $a_{ij} = \alpha_j(h_i)$, where $P^\vee = \bigoplus_{i\in I} \ZZ h_i$ is the dual weight lattice.  The Cartan subalgebra of $\mathfrak{g}$ is $\mathfrak{h} = \CC\otimes_\ZZ P^\vee$, and its dual is $\mathfrak{h}^* = \bigoplus_{i\in I} \CC\omega_i$.  We will denote the root lattice of $\mathfrak{g}$ by $Q = \bigoplus_{i\in I} \ZZ\alpha_i$, and the positive and negative root lattices, respectively, are $Q^+ = \bigoplus_{i\in I} \ZZ_{\ge 0}\alpha_i$ and $Q^- = -Q^+$.

Denote by $\Phi$ and $\Phi^+$, respectively, the set of roots and the set of positive roots, and define the Weyl vector $\rho$ by $2\rho = \sum_{\alpha\in\Phi^+} \alpha$.  The Weyl group of $\Phi$ is the subgroup $W\subset \operatorname{GL}(\mathfrak{h}^*)$ generated by simple reflections $\{s_i : i\in I\}$.  For each $w\in W$, there is a reduced expression $w = s_{i_1}\cdots s_{i_m}$, to which we may associate a {\it reduced word} $(i_1,\dots,i_m)$.  Let $R(w)$ denote the set of all such reduced words for a fixed $w\in W$.  In particular, we consider reduced words $\ii = (i_1,\dots,i_N) \in R(w_\circ)$, where $w_\circ$ is the longest element of $W$ and $N = \ell(w_\circ) = \#\Phi^+$.  Elements of $R(w_\circ)$ are called {\it long words}.

Let $q$ be an indeterminate, and let $U_q(\mathfrak{g})$ be the quantum group associated to $\mathfrak{g}$. An (abstract) {\it $U_q(\mathfrak{g})$-crystal} is a set $\BB$ together with maps 
\[
\wt\colon \BB \longrightarrow P, \ \ \ 
\widetilde e_i, \widetilde f_i\colon \BB \longrightarrow \BB\sqcup\{0\}, \ \ \ 
\varepsilon_i,\varphi_i\colon \BB \longrightarrow \ZZ\sqcup\{-\infty\},
\]
that satisfy a certain set of axioms (see, e.g., \cite{HK:02,Kash:95}). Of particular interest to us is the crystal $\BB(\infty)$
which is a combinatorial model of $U_q^-(\mathfrak{g})$. The crystal $\BB(\infty)$ was originally defined by Kashiwara in \cite{Kash:91}.

For the nonexceptional finite-dimensional Lie algebras, the semistandard Young tab\-leaux realization of $U_q(\mathfrak{g})$-crystals of highest weight representations $\BB(\lambda)$ with $\lambda$ a dominant integral weight, was constructed by M.\ Kashiwara and T.\ Naka\-shima \cite{KN:94}.  The $G_2$ description is due to S.-J.\ Kang and K.\ Misra \cite{KM:94}. The Young tableaux description of $\BB(\infty)$ is closely related to that of $\BB(\lambda)$ in the sense that the basic building blocks in both characterizations come from $\BB(\omega_1)$ for the fundamental weight $\omega_1$. 
The crystal graph of $\BB(\omega_1)$ is given in Figure \ref{fig:fundcrystal}.  
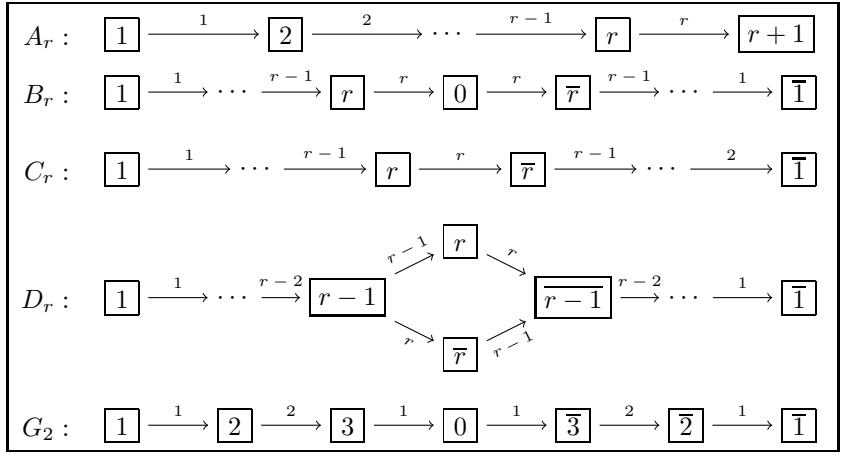
\begin{figure}[t]
\[
\begin{array}{|rl|}\hline
A_{r}: & 
\begin{tikzpicture}[scale=1.45,baseline=-4]
 \node (1) at (0,0) {$\young(1)$};
 \node (2) at (1.5,0) {$\young(2)$};
 \node (d) at (3.0,0) {$\cdots$};
 \node (n-1) at (4.5,0) {$\young(r)$};
 \node (n) at (6,0) {$\boxed{r+1}$};
 \draw[->] (1) to node[above]{\tiny$1$} (2);
 \draw[->] (2) to node[above]{\tiny$2$} (d);
 \draw[->] (d) to node[above]{\tiny$r-1$} (n-1);
 \draw[->] (n-1) to node[above]{\tiny$r$} (n);
\end{tikzpicture}\\
B_r : &
\begin{tikzpicture}[baseline=-4]
 \node (1) at (0,0) {$\young(1)$};
 \node (d1) at (1.5,0) {$\cdots$};
 \node (n) at (3,0) {$\young(r)$};
 \node (0) at (4.5,0) {$\young(0)$};
 \node (bn) at (6,0) {$\young(\brr)$};
 \node (d2) at (7.5,0) {$\cdots$};
 \node (b1) at (9,0) {$\young(\bon)$};
 \draw[->] (1) to node[above]{\tiny$1$} (d1);
 \draw[->] (d1) to node[above]{\tiny$r-1$} (n);
 \draw[->] (n) to node[above]{\tiny$r$} (0);
 \draw[->] (0) to node[above]{\tiny$r$} (bn);
 \draw[->] (bn) to node[above]{\tiny$r-1$} (d2);
 \draw[->] (d2) to node[above]{\tiny$1$} (b1);
\end{tikzpicture} 
\\[10pt]
C_r: &
\begin{tikzpicture}[baseline=-4]
 \node (1) at (0,0) {$\young(1)$};
 \node (d1) at (1.8,0) {$\cdots$};
 \node (n) at (3.6,0) {$\young(r)$};
 \node (bn) at (5.4,0) {$\young(\brr)$};
 \node (d2) at (7.2,0) {$\cdots$};
 \node (b1) at (9,0) {$\young(\bon)$};
 \draw[->] (1) to node[above]{\tiny$1$} (d1);
 \draw[->] (d1) to node[above]{\tiny$r-1$} (n);
 \draw[->] (n) to node[above]{\tiny$r$} (bn);
 \draw[->] (bn) to node[above]{\tiny$r-1$} (d2);
 \draw[->] (d2) to node[above]{\tiny$2$} (b1);
\end{tikzpicture}
\\[10pt]
D_r: & 
\begin{tikzpicture}[baseline=-4]
 \node (1) at (0,0) {$\young(1)$};
 \node (d1) at (1.5,0) {$\cdots$};
 \node (n-1) at (3,0) {$\boxed{r-1}$};
 \node (n) at (4.5,.75) {$\young(r)$};
 \node (bn) at (4.5,-.75) {$\young(\brr)$};
 \node (bn-1) at (6,0) {$\boxed{\overline{r-1}}$};
 \node (d2) at (7.5,0) {$\cdots$};
 \node (b1) at (9,0) {$\young(\bon)$};
 \draw[->] (1) to node[above]{\tiny$1$} (d1);
 \draw[->] (d1) to node[above]{\tiny$r-2$} (n-1);
 \draw[->] (n-1) to node[above,sloped]{\tiny$r-1$} (n);
 \draw[->] (n-1) to node[below,sloped]{\tiny$r$} (bn);
 \draw[->] (n) to node[above,sloped]{\tiny$r$} (bn-1);
 \draw[->] (bn) to node[below,sloped]{\tiny$r-1$} (bn-1);
 \draw[->] (bn-1) to node[above]{\tiny$r-2$} (d2);
 \draw[->] (d2) to node[above]{\tiny$1$} (b1);
\end{tikzpicture}
\\[30pt]
G_2: & 
\begin{tikzpicture}[baseline=-4]
 \node (1) at (0,0) {$\young(1)$};
 \node (2) at (1.5,0) {$\young(2)$};
 \node (3) at (3,0) {$\young(3)$};
 \node (0) at (4.5,0) {$\young(0)$};
 \node (b3) at (6,0) {$\young(\bth)$};
 \node (b2) at (7.5,0) {$\young(\btw)$};
 \node (b1) at (9,0) {$\young(\bon)$};
 \path[->,font=\tiny]
  (1) edge node[above]{$1$} (2)
  (2) edge node[above]{$2$} (3)
  (3) edge node[above]{$1$} (0)
  (0) edge node[above]{$1$} (b3)
  (b3) edge node[above]{$2$} (b2)
  (b2) edge node[above]{$1$} (b1);
\end{tikzpicture}\\\hline
\end{array}
\]
\caption{The fundamental crystals $\BB(\omega_1)$ when the underlying Lie algebra is of finite type.}\label{fig:fundcrystal}
\end{figure}

\section{A combinatorial realization of $\BB(\infty)$}\label{sec:binf}

This section is a summary of the results from \cite{HL:08}.  Recall that a tableaux $T$ is {\it semistandard} (with respect to an alphabet $J$; i.e., a totally ordered set) if entries are weakly increasing in rows from left to right and strictly increasing in columns from top to bottom.  J.\ Hong and H.\ Lee define a tableau $T$ to be {\it marginally large} if, for all $1 \le i \le r$, the number of $i$-boxes in the $i$th row of $T$ is greater than the number of all boxes in the $(i+1)$st row by exactly one.  Following \cite{HL:08}, we present the set $\TT(\infty)$ type-by-type. 

\subsection{Type $A$} When $\mathfrak{g}$ is of type $A_r$, $\TT(\infty)$ is the set of marginally large semistandard tableaux on the alphabet
\[
J(A_r) := \{ 1 \prec 2 \prec \cdots \prec r \prec r+1 \}
\]
satisfying the following conditions.
\begin{enumerate}
\item Each tableaux has exactly $r$ rows.
\item The first column has entries $1, 2,  \dots, r$.
\end{enumerate}

\begin{ex}
For $\mathfrak{g}$ of type $A_3$, the elements of $\TT(\infty)$ all have the form
\[
T = \begin{tikzpicture}[baseline]
\matrix [matrix of math nodes,column sep=-.4, row sep=-.4,text height=8, inner sep=3] 
 {
 	\node[draw,fill=gray!30]{1}; & 
	\node[draw,fill=gray!30]{1 \cdots 1}; & 
	\node[draw,fill=gray!30]{1}; & 
	\node[draw,fill=gray!30]{1\cdots 1}; & 
	\node[draw,fill=gray!30]{1\cdots 1}; & 
	\node[draw,fill=gray!30]{1}; &
	\node[draw]{2\cdots 2}; & 
	\node[draw]{3\cdots 3}; & 
	\node[draw]{4\cdots 4}; \\
	\node[draw,fill=gray!30]{2}; & 
	\node[draw,fill=gray!30]{2\cdots 2}; &
	\node[draw,fill=gray!30]{2}; & 
	\node[draw]{3 \cdots 3}; & 
	\node[draw]{4 \cdots 4}; \\
  	\node[draw,fill=gray!30]{3}; & 
	\node[draw]{4 \cdots 4}; \\
 };
\end{tikzpicture},
\]
where the shaded parts are the required parts and the unshaded parts are variable.  In particular, the unique element of weight zero in this crystal is
\[
T_\infty = \begin{tikzpicture}[baseline]
\matrix [tab] 
 {
 	\node[draw,fill=gray!30]{1}; & 
	\node[draw,fill=gray!30]{1}; & 
	\node[draw,fill=gray!30]{1}; \\
  	\node[draw,fill=gray!30]{2}; & 
	\node[draw,fill=gray!30]{2};  \\
  	\node[draw,fill=gray!30]{3};  \\
 };
\end{tikzpicture}.
\]
\end{ex}

\subsection{Type $B$} When $\mathfrak{g}$ is of type $B_r$, $\TT(\infty)$ is the set of marginally large semistandard tableaux on the alphabet
\[
J(B_r) := \{ 1 \prec \cdots \prec r \prec 0 \prec \overline r \prec \cdots \prec \overline 1\}
\]
satisfying the following conditions.
\begin{enumerate}
\item Each tableaux has exactly $r$ rows.
\item The first column has entries $1, 2,  \dots, r$.
\item Contents of each box in the $i$th row is less than or equal to $\overline \imath$ (with respect to $\prec$).
\item A $0$-box occurs at most once in each row.
\end{enumerate}

\begin{ex}
For $\mathfrak{g}$ of type $B_3$, the elements of $\TT(\infty)$ all have the form
\[
T = \begin{tikzpicture}[baseline,font=\small]
\matrix [matrix of math nodes,column sep=-.4, row sep=-.4,text height=8, inner sep=2.5] 
 {
 	\node[draw,fill=gray!30]{1}; & 
	\node[draw,fill=gray!30]{1}; & 
	\node[draw,fill=gray!30]{1 \cdots 1}; & 
	\node[draw,fill=gray!30]{1}; & 
	\node[draw,fill=gray!30]{1\cdots 1}; & 
	\node[draw,fill=gray!30]{1}; & 
	\node[draw,fill=gray!30]{1\cdots1}; & 
	\node[draw,fill=gray!30]{1\cdots1}; & 
	\node[draw,fill=gray!30]{1}; & 
	\node[draw]{2\cdots 2}; & 
	\node[draw]{3\cdots 3}; & 
	\node[draw]{0}; &
	\node[draw]{\overline 3 \cdots \overline 3}; & 
	\node[draw]{\overline 2\cdots \overline 2}; & 
	\node[draw]{\overline 1 \cdots \overline 1};\\
  	\node[draw,fill=gray!30]{2}; & 
	\node[draw,fill=gray!30]{2}; & 
	\node[draw,fill=gray!30]{2\cdots 2}; & 
	\node[draw,fill=gray!30]{2}; & 
	\node[draw]{3 \cdots 3}; & 
	\node[draw]{0}; & 
	\node[draw]{\overline 3\cdots \overline 3}; &
	\node[draw]{\overline 2\cdots \overline 2}; \\
  	\node[draw,fill=gray!30]{3}; & 
	\node[draw]{0}; &  
	\node[draw]{\overline 3 \cdots \overline 3}; \\
 };
\end{tikzpicture},
\]
where the shaded parts are the required parts and the unshaded parts are variable.  In particular, the unique element of weight zero in this crystal is
\[
T_\infty = \begin{tikzpicture}[baseline]
\matrix [tab] 
 {
 	\node[draw,fill=gray!30]{1}; & 
	\node[draw,fill=gray!30]{1}; & 
	\node[draw,fill=gray!30]{1}; \\
  	\node[draw,fill=gray!30]{2}; & 
	\node[draw,fill=gray!30]{2};  \\
  	\node[draw,fill=gray!30]{3};  \\
 };
\end{tikzpicture}.
\]
\end{ex}

\subsection{Type $C$} In type $C_r$, $\TT(\infty)$ is the set of marginally large semistandard tableaux on the alphabet
\[
J(C_r) := \{ 1 \prec \cdots \prec r \prec \overline r \prec \cdots \prec \overline 1\}
\]
satisfying the following conditions.
\begin{enumerate}
\item Each tableaux has exactly $r$ rows.
\item The first column has entries $1, 2,  \dots, r$.
\item Contents of each box in the $i$th row is less than or equal to $\overline\imath$ (with respect to $\prec$).
\end{enumerate}

\begin{ex}
For $\mathfrak{g}$ of type $C_3$, the elements of $\TT(\infty)$ all have the form
\[
T = \begin{tikzpicture}[baseline]
\matrix [matrix of math nodes,column sep=-.4, row sep=-.4,text height=8, inner sep=3] 
 {
 	\node[draw,fill=gray!30]{1}; & 
	\node[draw,fill=gray!30]{1 \cdots 1}; & 
	\node[draw,fill=gray!30]{1}; & 
	\node[draw,fill=gray!30]{1\cdots 1}; & 
	\node[draw,fill=gray!30]{1\cdots1}; & 
	\node[draw,fill=gray!30]{1\cdots1}; & 
	\node[draw,fill=gray!30]{1}; & 
	\node[draw]{2\cdots 2}; & 
	\node[draw]{3\cdots 3}; & 
	\node[draw]{\overline 3 \cdots \overline 3}; & 
	\node[draw]{\overline 2\cdots \overline 2}; & 
	\node[draw]{\overline 1 \cdots \overline 1};\\
  	\node[draw,fill=gray!30]{2}; & 
	\node[draw,fill=gray!30]{2\cdots 2}; & 
	\node[draw,fill=gray!30]{2}; & 
	\node[draw]{3 \cdots 3}; &
	\node[draw]{\overline 3\cdots \overline 3}; &
	\node[draw]{\overline 2\cdots \overline 2}; \\
  	\node[draw,fill=gray!30]{3}; & 
	\node[draw]{\overline 3 \cdots \overline 3}; \\
 };
\end{tikzpicture},
\]
where the shaded parts are the required parts and the unshaded parts are variable.  In particular, the unique element of weight zero in this crystal is
\[
T_\infty = \begin{tikzpicture}[baseline]
\matrix [tab] 
 {
 	\node[draw,fill=gray!30]{1}; & 
	\node[draw,fill=gray!30]{1}; &
 	\node[draw,fill=gray!30]{1}; \\
  	\node[draw,fill=gray!30]{2}; &
	\node[draw,fill=gray!30]{2};  \\
  	\node[draw,fill=gray!30]{3};  \\
 };
\end{tikzpicture}.
\]
\end{ex}

\subsection{Type $D$} In type $D_r$, $\TT(\infty)$ is the set of marginally large semistandard tableaux on the alphabet
\[
J(D_r) := \left\{ 1 \prec \cdots \prec r-1 \prec \begin{array}{c} r \\ \overline r \end{array} \prec \overline{r-1} \prec \cdots \prec \overline 1\right\}.
\]
satisfying the following conditions.
\begin{enumerate}
\item Each tableaux has exactly $r-1$ rows.
\item The first column has entries $1, 2,  \dots, r-1$.
\item Contents of each box in the $i$th row is less than or equal to $\overline\imath$ (with respect to $\prec$).
\item The entries $r$ and $\overline r$ do not appear in the same row.
\end{enumerate}

\begin{ex}
In type $D_4$, the elements of $\TT(\infty)$ all have the form
\[
T=\begin{tikzpicture}[baseline,font=\tiny]
\matrix [matrix of math nodes,column sep=-.4, row sep=-.5,text height=6,align=center,inner sep=1.5] 
 {
 	\node[draw,fill=gray!30]{$1$}; & 
	\node[draw,fill=gray!30]{\hspace{.75ex} $1\cdots 1$\hspace{.75ex} }; & 
	\node[draw,fill=gray!30]{$1 \cdots 1$}; & 
	\node[draw,fill=gray!30]{$1$}; & 
	\node[draw,fill=gray!30]{$1\cdots1$}; & 
	\node[draw,fill=gray!30]{\hspace{.75ex} $1\cdots1$\hspace{.75ex} }; & 
	\node[draw,fill=gray!30]{$1\cdots1$}; & 
	\node[draw,fill=gray!30]{$1\cdots1$}; & 
	\node[draw,fill=gray!30]{$1$}; & 
	\node[draw]{$2\cdots 2$}; & 
	\node[draw]{$3\cdots 3$}; & 
	\node[draw]{\raisebox{.45ex}{$x_1\cdots x_1$}}; & 
	\node[draw]{$\overline 3 \cdots \overline 3$}; & 
	\node[draw]{$\overline 2\cdots \overline 2$}; & 
	\node[draw]{$\overline 1 \cdots \overline 1$};\\
  	\node[draw,fill=gray!30]{$2$}; & 
	\node[draw,fill=gray!30]{\hspace{.75ex} $2\cdots 2$\hspace{.75ex} }; & 
	\node[draw,fill=gray!30]{$2\cdots 2$}; & 
	\node[draw,fill=gray!30]{$2$}; & 
	\node[draw]{$3 \cdots 3$}; & 
	\node[draw]{\raisebox{.45ex}{$x_2\cdots x_2$}}; & 
	\node[draw]{$\overline 3\cdots \overline 3$}; &
	\node[draw]{$\overline 2\cdots \overline 2$}; \\
  	\node[draw,fill=gray!30]{$3$}; & 
	\node[draw]{\raisebox{.45ex}{$x_3\cdots x_3$}}; & 
	\node[draw]{$\overline 3 \cdots \overline 3$}; \\
 };
\end{tikzpicture},
\]
where $x_i \in \{4,\overline4\}$ for each $i=1,2,3$, the shaded parts are the required parts, and the unshaded parts are variable.  In particular, the unique element of weight zero in this crystal is
\[
T_\infty = \begin{tikzpicture}[baseline]
\matrix [tab] 
 {
 	\node[draw,fill=gray!30]{1}; & 
	\node[draw,fill=gray!30]{1}; & 
	\node[draw,fill=gray!30]{1}; \\
  	\node[draw,fill=gray!30]{2}; & 
	\node[draw,fill=gray!30]{2}; \\
  	\node[draw,fill=gray!30]{3}; \\
 };
\end{tikzpicture}.
\]
\end{ex}

\subsection{Type $G$} Lastly, when $\mathfrak{g}$ is of type $G_2$, the elements of $\TT(\infty)$ all have the form
\[
T = \begin{tikzpicture}[baseline]
\matrix [matrix of math nodes,column sep=-.4, row sep=-.4,text height=8,inner sep=3] 
 {
  \node[draw,fill=gray!30]{1}; & 
  \node[draw,fill=gray!30]{1\cdots 1}; & 
  \node[draw,fill=gray!30]{1}; & 
  \node[draw]{2\cdots 2}; & 
  \node[draw]{3\cdots 3}; & 
  \node[draw]{0}; & 
  \node[draw]{\overline 3 \cdots \overline 3}; & 
  \node[draw]{\overline 2\cdots \overline 2}; & 
  \node[draw]{\overline 1 \cdots \overline 1};\\
  \node[draw,fill=gray!30]{2}; & 
  \node[draw]{3 \cdots 3}; \\
 };
\end{tikzpicture},
\]
where the shaded parts are the required parts and the unshaded parts are variable.  In particular, the unique element of weight zero in this crystal is
\[
T_\infty = \begin{tikzpicture}[baseline]
\matrix [tab] 
 {
 	\node[draw,fill=gray!30]{1}; & 
	\node[draw,fill=gray!30]{1}; \\
  	\node[draw,fill=gray!30]{2}; \\
 };
\end{tikzpicture}.
\]

\bigskip

A crystal structure can be defined on $\TT(\infty)$ as in \cite{HL:08} by embedding a tableau $T$ of $\TT(\infty)$ into a tensor power of the fundamental crystal $\BB(\omega_1)$ via the far-Eastern reading (where the tensor product is defined as usual \cite{HK:02,Kash:95}).  The following theorem is established by J.\ Hong and H.\ Lee.

\begin{theorem}[{\rm\cite{HL:08}}]
For underlying Lie types $A_r$, $B_r$, $C_r$, $D_r$, and $G_2$, there is a crystal isomorphism between $\TT(\infty)$ and $\BB(\infty)$.
\end{theorem}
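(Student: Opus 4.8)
The plan is to realize $\TT(\infty)$ as the stable limit of the Kashiwara--Nakashima (and, for $G_2$, Kang--Misra) tableau models of the irreducible highest-weight crystals $\BB(\lambda)$, and then to transport the known description of $\BB(\infty)$ as a limit of the $\BB(\lambda)$ across this identification. The starting point is Kashiwara's theory: for each $\lambda\in P^+$ there is a surjective map $\pr_\lambda\colon \BB(\infty)\to\BB(\lambda)\sqcup\{0\}$ sending the highest weight vector $b_\infty$ to that of $\BB(\lambda)$ and commuting with the lowering operators wherever the image is nonzero, these maps are compatible as $\lambda$ increases, and $\BB(\infty)$ is recovered as the inverse limit, so that an element is determined by its compatible family of images. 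The marginal-largeness condition is engineered precisely so that a tableau $T\in\TT(\infty)$ encodes such a family: deleting from $T$ its prescribed margin (for each $i$, all but the required surplus of $i$-boxes in row $i$ dictated by $\lambda$) yields a semistandard tableau $\pr_\lambda(T)$ representing an element of $\BB(\lambda)$ for all sufficiently dominant $\lambda$.

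The core of the argument is to define the crystal operators on $\TT(\infty)$ as in the excerpt --- via the far-Eastern reading word and the tensor product rule on $\BB(\omega_1)^{\otimes N}$ --- and to verify two compatibility statements. First, $\tilde e_i,\tilde f_i$ map $\TT(\infty)$ into $\TT(\infty)\sqcup\{0\}$: they preserve semistandardness, the type-specific conditions (the column entries, the bound $\overline\imath$ on row $i$, the restrictions on $0$ and on $r$ versus $\overline r$), and marginal largeness, the last up to inserting or deleting a single required box to restore the margin. Second, these operators are intertwined with the $\BB(\lambda)$ operators by margin-deletion, i.e. $\pr_\lambda$ agrees with the abstract projection. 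This second point rests on the locality of the Kashiwara--Nakashima operators with respect to the reading word: the signature computation that governs $\tilde e_i,\tilde f_i$ is unaffected by appending columns far to the left, so adding or removing margin columns commutes with the operators and passing to the limit is harmless.

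With the compatible families in hand, I would then pin down the boundary data: $\wt(\TT(\infty))\subset Q^-$; the weight-zero tableau $T_\infty$ is the unique highest weight vector (no $\tilde e_i$ raises it, while every other tableau is raised by some $\tilde e_i$, which follows from connectedness of each $\BB(\lambda)$ together with margin-deletion); and $\varepsilon_i,\varphi_i$ take finite values. The inverse-limit characterization then forces a weight-preserving bijection $\TT(\infty)\xrightarrow{\sim}\BB(\infty)$ carrying $T_\infty$ to $b_\infty$ and intertwining all crystal operators, which is exactly the desired isomorphism.

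I expect the main obstacle to be the first paragraph of the core step: controlling the interaction of the Kashiwara operators with the marginal-largeness bookkeeping. Applying $\tilde f_i$ can convert an $i$-box to an $(i+1)$-box (or its barred analogue), which may violate the ``one more than the row below'' margin and force the creation of a fresh box in row $i$; dually $\tilde e_i$ may shorten a row below threshold. Checking that these corrections can always be performed consistently while preserving semistandardness and the alphabet order, and that the type-specific conditions for $B_r$, $C_r$, $D_r$, and $G_2$ (the single $0$ per row, the exclusion of $r$ and $\overline r$ from a common row, and the collapsed $r,\overline r$ node in $D_r$) survive, is the delicate, genuinely type-by-type heart of the proof. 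The alternative route of verifying the Kashiwara--Saito axioms directly on $\TT(\infty)$ --- including the $\ast$-involution and its compatibility with $\tilde e_i,\tilde e_i^{\,\ast}$ --- would only shift the difficulty to giving a combinatorial model of $\ast$ on tableaux, which appears no easier.
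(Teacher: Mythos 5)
This theorem is not proved in the paper at all: it is quoted from Hong and Lee \cite{HL:08} and used as a black box, so there is no internal argument to compare your attempt against; the relevant comparison is with the cited source. Your outline is essentially a reconstruction of Hong and Lee's own strategy: they likewise define $\tilde e_i,\tilde f_i$ on $\TT(\infty)$ via the far-Eastern reading and the tensor product rule in powers of $\BB(\omega_1)$, restore marginal largeness by inserting or deleting a single required column, and identify $\TT(\infty)$ with the limit of the Kashiwara--Nakashima (and, for $G_2$, Kang--Misra) tableau crystals $\BB(\lambda)$ as $\lambda$ moves deep into the dominant cone.

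Two caveats keep your proposal at the level of an outline rather than a proof. First, the limit is most cleanly taken as a \emph{direct} limit along the standard embeddings of $\BB(\lambda)$ into $\BB(\mu)$ (up to weight shift) for $\mu-\lambda$ dominant, rather than as the inverse limit of the projections $\pr_\lambda$: that inverse limit contains the identically-zero compatible family, and, more substantively, surjectivity of the resulting map $\TT(\infty)\to\BB(\infty)$ requires showing that for each $b\in\BB(\infty)$ the tableaux $\pr_\lambda(b)$, for all sufficiently dominant $\lambda$, stabilize after margin-stripping to one and the same marginally large tableau. This stabilization is exactly what you assert through ``locality of the signature computation,'' but it is asserted, not proved. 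Second, as you yourself flag, the type-by-type verification that the operators preserve semistandardness, the row bounds, the $0$-box condition, the exclusion of $r$ and $\overline r$ from a common row, and marginal largeness is the actual mathematical content of \cite{HL:08}; deferring it as ``the delicate heart'' means the hard work remains undone. So: correct approach, matching the cited reference, but incomplete where the real difficulty lies.
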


Following D.\ Bump and M.\ Nakasuji in \cite{BN:10}, we wish to suppress the required columns from the tableaux and only include the variable parts.   This convention will save space, making drawing the graphs easier and it will help make the $k$-segments, to be defined later, stand out.  We will call this modification of $T\in \TT(\infty)$ the {\it reduced form} of $T$, and denote it by $T^\sharp$, for $T\in \TT(\infty)$.  For example, in type $C_3$, we have
\[
\left(\, 
\begin{tikzpicture}[baseline]
\matrix [tab] 
 {
 	\node[draw,fill=gray!30]{1}; & 
	\node[draw,fill=gray!30]{1}; & 
	\node[draw,fill=gray!30]{1}; \\
  	\node[draw,fill=gray!30]{2}; & 
	\node[draw,fill=gray!30]{2};  \\
  	\node[draw,fill=gray!30]{3};  \\
 };
\end{tikzpicture}
\,\right)^\sharp =
\begin{tikzpicture}[baseline]
\matrix [tab] 
 {
 	\node[draw]{*}; \\
  	\node[draw]{*}; \\
  	\node[draw]{*}; \\
 };
\end{tikzpicture}
\]
and, in $C_2$, 
\[
\left(\,
\begin{tikzpicture}[baseline]
\matrix [tab] 
 {
 	\node[draw,fill=gray!30]{1}; & 
	\node[draw,fill=gray!30]{1}; & 
	\node[draw,fill=gray!30]{1}; &
	\node[draw,fill=gray!30]{1}; & 
	\node[draw,fill=gray!30]{1}; & 
	\node[draw]{2}; & 
	\node[draw]{2}; \\
  	\node[draw,fill=gray!30]{2}; & 
	\node[draw]{\btw}; & 
	\node[draw]{\btw}; & 
	\node[draw]{\btw};  \\
 };
\end{tikzpicture}
\,\right)^\sharp =  
\begin{tikzpicture}[baseline]
\matrix [tab] 
 {
 	\node[draw]{2}; & 
	\node[draw]{2}; \\
  	\node[draw]{\btw}; & 
	\node[draw]{\btw}; & 
	\node[draw]{\btw};  \\
 };
\end{tikzpicture}\ ,
\]
where $*$ is used to denote a row without any variable entries.
In particular, the resulting shape need not be a Young diagram.  Note that there is no essential information lost when passing to the reduced form.  Set $\TT(\infty)^\sharp = \{ T^\sharp : T\in \TT(\infty) \}$.  
We conclude with some examples of $\TT(\infty)$ crystals, of course with only the top part of the graph computed.  
See Figures \ref{fig:B3} and \ref{fig:G2} for $\TT(\infty)^\sharp$ when $\mathfrak{g}$ is of type $B_3$ and $G_2$, respectively.

\begin{figure}[t]
\centering
\begin{tikzpicture}[scale=2.5]
\node (0) at (0,0) {$\young(*,*,*)$};
\node (1) at (-1,-1) {$\young(2,*,*)$};
\node (2) at (0,-1)  {$\young(*,3,*)$};
\node (3) at (1,-1)  {$\young(*,*,0)$};
\node (11) at (-2,-2) {$\young(22,*,*)$};
\node (12) at (-1,-2) {$\young(3,*,*)$};
\node (13) at (-.5,-2){$\young(2,*,0)$};
\node (21) at (-1.25,-2){$\young(2,3,*)$};
\node (22) at (0,-2)  {$\young(*,33,*)$};
\node (23) at (.75,-2) {$\young(*,0,*)$};
\node (32) at (1,-2)  {$\young(*,3,0)$};
\node (33) at (2,-2)  {$\young(*,*,\bth)$};
\path[->,font=\scriptsize,inner sep=1]
	(0) edge node[midway,fill=white]{$1$} (1.90)
	    edge node[midway,fill=white]{$2$} (2.90)
	    edge node[midway,fill=white]{$3$} (3.90)
    (3) edge node[near start,fill=white]{$1$} (13.70)
        edge node[midway,fill=white]{$2$} (32.90)
        edge node[midway,fill=white]{$3$} (33.120)
    (2) edge node[near start,fill=white]{$1$} (21.90)
        edge node[midway,fill=white]{$2$} (22.90)
        edge node[midway,fill=white]{$3$} (23.90)
	(1) edge node[midway,fill=white]{$1$} (11.90)
        edge node[midway,fill=white]{$2$} (12.90)
        edge node[midway,fill=white]{$3$} (13.110);
\end{tikzpicture}
\caption{The top part of $\mathcal{T}(\infty)^\sharp$ in type $B_3$.}
\label{fig:B3}
\end{figure}
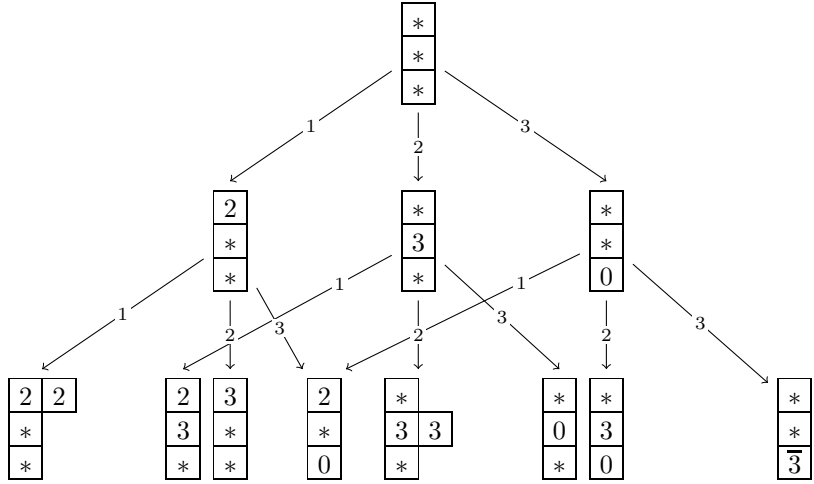

\begin{figure}[t]
\centering
\begin{tikzpicture}[xscale=1.8,yscale=2]
\node (0) at (0,0) {$\young(*,*)$};
\node (11) at (-1,-1) {$\young(2,*)$};
\node (12) at (1,-1) {$\young(*,3)$};
\node (21) at (-2,-2) {$\young(22,*)$};
\node (22) at (-.25,-2) {$\young(3,*)$};
\node (23) at (.25,-2) {$\young(2,3)$};
\node (24) at (2,-2) {$\young(*,33)$};
\node (31) at (-3,-3) {$\young(222,*)$};
\node (32) at (-1.75,-3) {$\young(23,*)$};
\node (33) at (-1.2,-3) {$\young(0,*)$};
\node (34) at (-.65,-3) {$\young(22,3)$};
\node (35) at (.7,-3) {$\young(3,3)$};
\node (36) at (1.6,-3) {$\young(2,33)$};
\node (37) at (3,-3) {$\young(*,333)$};
\path[->,font=\scriptsize,inner sep=1]
 (0) edge node[midway,fill=white]{$1$} (11)
 (0) edge node[midway,fill=white]{$2$} (12)
 (11) edge node[midway,fill=white]{$1$} (21)
 (11) edge node[midway,fill=white]{$2$} (22)
 (12) edge node[midway,fill=white]{$1$} (23)
 (12) edge node[midway,fill=white]{$2$} (24)
 (21) edge node[midway,fill=white]{$1$} (31)
 (21) edge node[midway,fill=white]{$2$} (32)
 (22) edge node[midway,fill=white]{$1$} (33.90)
 (22) edge node[midway,fill=white]{$2$} (35.90)
 (23) edge node[midway,fill=white]{$1$} (34.90)
 (23) edge node[midway,fill=white]{$2$} (36)
 (24) edge node[midway,fill=white]{$1$} (36)
 (24) edge node[midway,fill=white]{$2$} (37);
\end{tikzpicture}
\caption{The top part of $\mathcal{T}(\infty)^\sharp$ in type $G_2$.}
\label{fig:G2}
\end{figure}

\section{Main result}\label{sec:main}

In this section, $X_r =A_r, B_r$, $C_r$, $D_r$, or $G_2$.  This first definition is a generalization of that given in \cite{LS-A}.

\begin{definition} \label{def-seg}
Let $T\in \TT(\infty)$.  
\begin{enumerate}
\item Define a {\it $k$-segment}, $k \in J(X_r)\setminus\{1\}$, to be a maximal sequence of $k$-boxes in one row of $T$.
By definition, we do not consider the required collection of $k$-boxes beginning the $k$th row of $T$ to be a $k$-segment.
\item Let $\seg'(T)$ be the total number of segments of $T$. We define $\seg(T)$ type-by-type.

\begin{itemize}
\item In type $A_r$ or $C_r$, we simply define $\seg(T) = \seg'(T)$.

\item In type $B_r$, we set $e_B(T)$ to be the number of rows $i$ that contain both $0$-segment and $\overline \imath$-segment, and define $\seg(T)=\seg'(T) - e_B(T)$.

\item In type $D_r$, we set $e_D(T)$ to be the number of rows $i$ that contain $\overline \imath$-segment but neither $r$- nor $\overline r$-segment, and define
\[ \seg(T)= \seg'(T) + e_D(T) .\]

\item In type $G_2$, if the first row contains both $0$-segment and $\overline 1$-segment, we define $\seg(T) =\seg'(T)-1$; otherwise, $\seg(T) =\seg'(T)$.
\end{itemize}
\end{enumerate}
\end{definition}

\begin{ex}
Let $X_r = A_3$ and 
\[
T = \begin{tikzpicture}[baseline]
\matrix [tab] 
 {
 	\node[draw,fill=gray!30]{1}; & 
	\node[draw,fill=gray!30]{1}; & 
	\node[draw,fill=gray!30]{1}; & 
	\node[draw,fill=gray!30]{1}; & 
	\node[draw,fill=gray!30]{1}; & 
	\node[draw,fill=gray!30]{1}; & 
	\node[draw,fill=gray!30]{1}; & 
	\node[draw,fill=gray!30]{1}; & 
	\node[draw,fill=gray!30]{1}; & 
	\node[draw]{2}; & 
	\node[draw]{2}; & 
	\node[draw]{3}; &
	\node[draw]{3}; & \\
  	\node[draw,fill=gray!30]{2}; & 
	\node[draw,fill=gray!30]{2}; & 
	\node[draw,fill=gray!30]{2}; & 
	\node[draw,fill=gray!30]{2}; & 
	\node[draw]{3}; & 
	\node[draw]{3}; & 
	\node[draw]{4}; &
	\node[draw]{4}; \\
  	\node[draw,fill=gray!30]{3}; & 
	\node[draw]{4}; &  
	\node[draw]{4}; \\
 };
\end{tikzpicture}.
\]
Then $\seg(T) =\seg'(T)= 5$, since there are a $2$-segment and a $3$-segment in the first row, a $3$-segment and a $4$-segment in the second row, and a $4$-segment in the third row.
\end{ex}

\begin{ex}\label{ex:bseg}
Let $X_r = B_3$ and 
\[
T = \begin{tikzpicture}[baseline]
\matrix [tab] 
 {
 	\node[draw,fill=gray!30]{1}; & 
	\node[draw,fill=gray!30]{1}; & 
	\node[draw,fill=gray!30]{1}; & 
	\node[draw,fill=gray!30]{1}; & 
	\node[draw,fill=gray!30]{1}; & 
	\node[draw,fill=gray!30]{1}; & 
	\node[draw,fill=gray!30]{1}; & 
	\node[draw,fill=gray!30]{1}; & 
	\node[draw,fill=gray!30]{1}; & 
	\node[draw]{2}; & 
	\node[draw]{2}; & 
	\node[draw]{0}; &
	\node[draw]{\overline 3}; &
	\node[draw]{\overline 1}; & 
	\node[draw]{\overline 1}; & 
	\node[draw]{\overline 1};\\
  	\node[draw,fill=gray!30]{2}; & 
	\node[draw,fill=gray!30]{2}; & 
	\node[draw,fill=gray!30]{2}; & 
	\node[draw,fill=gray!30]{2}; & 
	\node[draw]{3}; & 
	\node[draw]{3}; & 
	\node[draw]{\overline 2}; &
	\node[draw]{\overline 2}; \\
  	\node[draw,fill=gray!30]{3}; & 
	\node[draw]{0}; &  
	\node[draw]{\overline 3}; \\
 };
\end{tikzpicture}.
\]
Immediately, we see $\seg'(T)=8$. The first row has a $0$-segment and a $\overline 1$-segment, and the third row has both a $0$-segment and a $\overline 3$-segment. Thus $e_B(T)=2$, and we obtain
$\seg(T) = \seg'(T) -e_B(T)=6$.
\end{ex}

\begin{ex}\label{ex:dseg}
Let $X_r = D_4$ and
\[
T = \begin{tikzpicture}[baseline]
\matrix [tab] 
 {
 	\node[draw,fill=gray!30]{1}; & 
	\node[draw,fill=gray!30]{1}; & 
	\node[draw,fill=gray!30]{1}; & 
	\node[draw,fill=gray!30]{1}; & 
	\node[draw,fill=gray!30]{1}; & 
	\node[draw,fill=gray!30]{1}; & 
	\node[draw,fill=gray!30]{1}; & 
	\node[draw,fill=gray!30]{1}; & 
	\node[draw,fill=gray!30]{1}; & 
	\node[draw]{2}; & 
	\node[draw]{2}; &
	\node[draw]{\overline 3}; &
	\node[draw]{\overline 1}; & 
	\node[draw]{\overline 1}; & 
	\node[draw]{\overline 1};\\
  	\node[draw,fill=gray!30]{2}; & 
	\node[draw,fill=gray!30]{2}; & 
	\node[draw,fill=gray!30]{2}; & 
	\node[draw,fill=gray!30]{2}; & 
	\node[draw]{3}; & 
	\node[draw]{\overline 4}; &
	\node[draw]{\overline 3}; &
	\node[draw]{\overline 3}; \\
  	\node[draw,fill=gray!30]{3}; & 
	\node[draw]{\overline 4}; &  
	\node[draw]{\overline 3}; \\
 };
\end{tikzpicture}.
\]
Clearly, $\seg'(T)=8$. 
The first row has a $\overline 1$-segment but no $4$- nor $\overline 4$-segment. The second row does not have a $\overline 2$-segment, while the third row has a $\overline 3$-segment and a $\overline 4$-segment.  Thus $e_D(T)=1$, where the sole contribution comes from the first row. Then we have
\[
\seg(T) = \seg'(T) + e_D(T) = 8+1=9. 
\]
\end{ex}

\begin{ex}\label{ex:gseg}
Let $X_r = G_2$ and
\[
T = \begin{tikzpicture}[baseline]
\matrix [tab] 
 {
 	\node[draw,fill=gray!30]{1}; & 
	\node[draw,fill=gray!30]{1}; & 
	\node[draw,fill=gray!30]{1}; & 
	\node[draw,fill=gray!30]{1}; & 
	\node[draw,fill=gray!30]{1}; & 
	\node[draw,fill=gray!30]{1}; & 
	\node[draw]{3}; & 
	\node[draw]{3}; & 
	\node[draw]{0}; &
	\node[draw]{\overline 3}; &
	\node[draw]{\overline 2}; & 
	\node[draw]{\overline 1}; & 
	\node[draw]{\overline 1};\\
  	\node[draw,fill=gray!30]{2}; & 
	\node[draw]{3}; & 
	\node[draw]{3}; & 
	\node[draw]{3}; & 
	\node[draw]{3}; & \\
 };
\end{tikzpicture}.
\]
Then $\seg(T) = \seg'(T)-1=6-1=5$, since the first row contains both a $0$-segment and a $\overline 1$-segment.
\end{ex}

Now we will relate tableaux in $\TT(\infty)$ to Kostant partitions. Set $R = \{ (\alpha) : \alpha\in\Phi^+\}$, where each $(\alpha)$ is considered as a formal symbol.  Define $\R$ to be the free abelian group generated by $R$, and let $\R^+$ be the set of the elements in $\R$ with coefficients from $\ZZ_{\ge0}$. An element of $\R^+$ should be considered as a Kostant partition, and will denoted by a boldface Greek letter; i.e., 
\[
\bm\alpha = \sum_{(\alpha)\in R} c_{(\alpha)}(\alpha).
\]  
If $c_{(\alpha)} \neq 0$, then we call $(\alpha)$ a {\em part} of $\bm\alpha$.

We define a map $\Xi\colon \TT(\infty) \longrightarrow \R^+$ by associating elements of $\R^+$ to segments of $T\in\TT(\infty)$.  In the following, $\ell_{i,k}(T)$ denotes the number of $k$-boxes in the $i$th row of $T$, and each segment is in the $i$th row except for the type $G_2$. We define $\Xi$ on a case-by-case basis.  
\begin{enumerate}[$\bullet$]

\item $\mathfrak{g}$ is of type $A_r$:
\begin{align*}
\young(k\cd k) &\mapsto
\ell_{i,k}(T)(\alpha_i + \alpha_{i+1}+ \cdots + \alpha_{k-1}) ,
\ 1 \le i < k \le r+1;
\end{align*}

\item  $\mathfrak{g}$ is of type $B_r$:
\begin{align*}
\young(k\cd k)   &\mapsto
\ell_{i,k}(T)(\alpha_i + \alpha_{i+1} + \cdots +\alpha_{k-1}) ,\ 1 \le i < k \le r,
\\
\young(0) 
  &\mapsto
(\alpha_{i}+ \alpha_{i+1}+ \cdots +\alpha_{r}), \ 1 \le i \le r, 
\\
\young(\bk\cd\bk)   &\mapsto
\ell_{i,\overline k}(T)(\alpha_i+ \cdots +\alpha_{k-1}+2\alpha_k+ \cdots +2\alpha_{r}),\ 1 \le i<k \le r,
\\
\young(\bi\cd\bi)   &\mapsto
2\ell_{i,\overline\imath}(T)(\alpha_i+ \cdots +\alpha_{r-1}+\alpha_r), \ 1 \le i \le r;
\end{align*}

\item $\mathfrak{g}$ is of type $C_r$:
\begin{align*}
\young(k\cd k)   &\mapsto
\ell_{i,k}(T)(\alpha_i + \alpha_{i+1}+ \cdots + \alpha_{k-1}) , \ 1 \le i < k \le r, 
\\
\young(\bk\cd\bk)   &\mapsto
\ell_{i,\overline k}(T)(\alpha_i + \cdots + \alpha_{r-1}+ \alpha_r + \alpha_{r-1}+ \cdots +\alpha_k),\ 1 \le i \le k \le r ;
\end{align*}

\item $\mathfrak{g}$ is of type $D_r$:
\begin{align*}
\young(k\cd k)   &\mapsto
\ell_{i,k}(T)(\alpha_i + \alpha_{i+1} + \cdots + \alpha_{k-1}) , \ 1\le i < k \le r-1, 
\\
\young(r\cd r)   &\mapsto
\ell_{i,r}(T)(\alpha_i + \alpha_{i+1} + \cdots + \alpha_{r-2}+\alpha_{r-1}) , \ 1 \le i \le r-1,
\\
\young(\brr\cd\brr)   &\mapsto
\ell_{i,\overline r}(T)(\alpha_i + \alpha_{i+1} + \cdots + \alpha_{r-2}+\alpha_r) , \ 1 \le i \le r-1, 
\\
\young(\bk\cd\bk)   &\mapsto
\ell_{i,\overline k}(T)(\alpha_i + \cdots + \alpha_{r-1}+ \alpha_{r}+\alpha_{r-2}+\cdots+\alpha_k) ,\ 1\le i <k \le r-1,
\\
\young(\bi\cd\bi)   &\mapsto
\ell_{i,\overline\imath}(T)\big((\alpha_i + \cdots + \alpha_{r-1})+(\alpha_i+\cdots+\alpha_{r-2}+\alpha_r)\big), \ 1 \le i \le r-1 ;
\end{align*}

\item $\mathfrak{g}$ is of type $G_2$:
\begin{align*}
\young(2\cd2) \text{ in the first row} &\mapsto
\ell_{1,2}(T)(\alpha_1),
\\
\young(3\cd3) \text{ in the first row} &\mapsto
\ell_{1,3}(T)(\alpha_1+\alpha_2) ,
\\
\young(0) 
\text{ in the first row} &\mapsto
(2\alpha_1 + \alpha_2),
\\
\young(\bth\cd\bth) \text{ in the first row} &\mapsto
\ell_{1,\overline 3}(T)(3\alpha_1+ \alpha_2),
\\
\young(\btw\cd\btw) \text{ in the first row} &\mapsto
\ell_{1,\overline 2}(T)(3\alpha_1+2\alpha_2),
\\
\young(\bon\cd\bon) \text{ in the first row} &\mapsto
2\ell_{1,\overline 1}(T)(2\alpha_1+\alpha_2) ,
\\
\young(3\cd3) \text{ in the second row} &\mapsto
\ell_{2,3}(T)(\alpha_2).
\end{align*}
\end{enumerate}
Then $\Xi(T)$ is defined to be the sum of elements in $\R^+$ corresponding to segments of $T$ as prescribed by the above rules.

\begin{ex}\label{ex:btheta}
Let $T$ be the type $B_3$ tableaux from Example \ref{ex:bseg}.  Then
\begin{align*}
\Xi(T) &= 
2(\alpha_1) + 
(\alpha_1+\alpha_2+\alpha_3) + 
(\alpha_1+\alpha_2+2\alpha_3) + \\
& \ \ \ \ \ \ \ \ \ 
3\cdot 2(\alpha_1+\alpha_2+\alpha_3) + 
2(\alpha_2) + 
2\cdot 2(\alpha_2+\alpha_3) +
(\alpha_3) +
2(\alpha_3) \\
&=2(\alpha_1)+7(\alpha_1+\alpha_2+\alpha_3)+(\alpha_1+\alpha_2+2\alpha_3)+2(\alpha_2)+4(\alpha_2+\alpha_3)+3(\alpha_3).
\end{align*}
\end{ex}

The following proposition is essential for the proof of the main theorem.

\begin{proposition}\label{prop:bijsegroot}
The map $\Xi\colon \TT(\infty)\longrightarrow \R^+$ is a bijection.  Moreover, $\seg(T)$ is equal to the number of distinct parts of $\Xi(T)$ for $T \in \TT(\infty)$.  
\end{proposition}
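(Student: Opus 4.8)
The plan is to treat both sides as explicit coordinatizations by nonnegative integers and to exhibit $\Xi$ as the induced change of coordinates, which I will check is invertible. On the source side, a tableau $T\in\TT(\infty)$ is determined by its reduced form $T^\sharp$, and the Hong--Lee description shows that $T^\sharp$ is in turn determined by the lengths $\ell_{i,k}(T)$ of its segments; these range over $\ZZ_{\ge0}$ subject only to the defining conditions of Section \ref{sec:binf} (in particular $\ell_{i,0}(T)\in\{0,1\}$ in types $B_r$ and $G_2$, and $\ell_{i,r}(T)\,\ell_{i,\overline r}(T)=0$ in type $D_r$). On the target side, an element of $\R^+$ is exactly a choice of coefficient $c_{(\alpha)}\in\ZZ_{\ge0}$ for each $\alpha\in\Phi^+$. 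The key observation is that $\Xi$ is \emph{local to rows}: every positive root $\alpha$ has a well-defined smallest support index $i(\alpha)$, and inspection of the rules defining $\Xi$ shows that $c_{(\alpha)}$ depends only on the segments lying in row $i(\alpha)$ (in type $G_2$, ``row $1$'' collects the roots with nonzero $\alpha_1$-coefficient and ``row $2$'' the single root $\alpha_2$). Hence $\Xi$ factors as a product of independent per-row maps, and it suffices to prove each per-row map is a bijection and to compare, row by row, the number of segments with the number of roots receiving a positive coefficient.

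For types $A_r$ and $C_r$ this is immediate: the rules of $\Xi$ set up a bijection between the admissible box-entries $k$ of row $i$ and the positive roots with smallest support index $i$, under which a $k$-segment of length $\ell_{i,k}(T)$ maps to that root with coefficient $\ell_{i,k}(T)$. Thus distinct segments produce distinct parts, a segment has positive length if and only if the corresponding part is nonzero, and the per-row map is visibly invertible. Summing over rows yields both the bijection and the equality $\seg'(T)=\#\{\text{distinct parts of }\Xi(T)\}$; since $\seg(T)=\seg'(T)$ here, the second assertion follows.

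In types $B_r$, $D_r$, and $G_2$ the per-row correspondence of box-types with roots fails to be a bijection, and the corrections $e_B$, $e_D$, and the $G_2$-term are exactly the bookkeeping for this. In type $B_r$ (and identically in $G_2$ with $i=1$), the $0$-segment and the $\overline\imath$-segment of row $i$ both map to the short root $\alpha_i+\cdots+\alpha_r$, contributing $\ell_{i,0}(T)+2\,\ell_{i,\overline\imath}(T)$ to its coefficient; since $\ell_{i,0}(T)\in\{0,1\}$, this recovers $(\ell_{i,0}(T),\ell_{i,\overline\imath}(T))$ uniquely from the coefficient via its parity, so the per-row map inverts and $\Xi$ is bijective. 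For the count, the only doubly-hit root is this short root, and only in rows containing both a $0$- and an $\overline\imath$-segment; such a row contributes one fewer distinct part than segment, so $\#\{\text{distinct parts}\}=\seg'(T)-e_B(T)=\seg(T)$, and likewise for $G_2$.

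Type $D_r$ is genuinely two-dimensional and is the main obstacle. Here the $\overline\imath$-segment of row $i$ maps to the \emph{sum} of the two roots $\beta_i^-=\alpha_i+\cdots+\alpha_{r-1}$ and $\beta_i^+=\alpha_i+\cdots+\alpha_{r-2}+\alpha_r$, which also receive the $r$- and $\overline r$-contributions respectively, giving $c_{(\beta_i^-)}=\ell_{i,r}(T)+\ell_{i,\overline\imath}(T)$ and $c_{(\beta_i^+)}=\ell_{i,\overline r}(T)+\ell_{i,\overline\imath}(T)$. Using the constraint $\ell_{i,r}(T)\,\ell_{i,\overline r}(T)=0$, I recover $\ell_{i,\overline\imath}(T)=\min\bigl(c_{(\beta_i^-)},c_{(\beta_i^+)}\bigr)$ and then $\ell_{i,r}(T),\ell_{i,\overline r}(T)$ uniquely, which inverts the per-row map and yields the bijection. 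For the count, a row-by-row check shows segment and distinct-part counts agree except when $\ell_{i,\overline\imath}(T)>0$ while $\ell_{i,r}(T)=\ell_{i,\overline r}(T)=0$: there a single $\overline\imath$-segment produces the two distinct parts $\beta_i^\pm$, an excess of one. These are precisely the $e_D(T)$ rows of Definition \ref{def-seg}, so $\#\{\text{distinct parts}\}=\seg'(T)+e_D(T)=\seg(T)$. The one point requiring care beyond these verifications is the initial claim that segment lengths coordinatize $\TT(\infty)$ freely; this is where the marginally large hypothesis enters, guaranteeing enough required boxes that column-strictness imposes no further relation among the $\ell_{i,k}(T)$.
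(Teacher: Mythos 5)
Your proposal is correct and follows essentially the same route as the paper's own proof: both invert $\Xi$ explicitly in terms of the segment-length coordinates $\ell_{i,k}(T)$, resolving the only ambiguities by parity (equivalently, the floor $\lfloor c_{i,r}/2\rfloor$) in types $B_r$ and $G_2$ and by the $\min/\max$ formulas together with the constraint $\ell_{i,r}(T)\,\ell_{i,\overline r}(T)=0$ in type $D_r$, and then both derive the segment count from the same row-by-row comparison that produces the corrections $e_B$ and $e_D$. The only differences are presentational: you frame the argument as a factorization into independent per-row maps and make explicit the (true, and implicitly used) fact that marginal largeness lets the $\ell_{i,k}(T)$ vary freely, whereas the paper proceeds type-by-type and omits the $G_2$ case as analogous to $B_2$.
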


\begin{proof}
As before, let $\ell_{i,k}(T)$ be the length of the $k$-segment in the $i$th row of $T$. It is important to notice that the data $\{ \ell_{i,k}(T) \}_{i,k}$ completely determines $T$.  For the readers convenience, a list of positive roots for each type is provided in Table \ref{tab:root} (and Table \ref{tab:canonical}).
We will prove the proposition on a type-by-type basis, where, in each type, we construct a map $\Upsilon\colon \R^+ \longrightarrow \TT(\infty)$ which is the inverse of $\Xi$. Since the types $A_r$ and $C_r$ are simpler than the types $B_r$, $D_r$, and $G_2$, we deal with the types $A_r$ and $C_r$ first. 

\begin{table}[t]
\doublespacing
\[
\begin{array}{|c|c|}\hline
X_r & \Phi(X_r) \\\hline
\multirow{1}{*}{$A_r$} & \beta_{i,k}= \alpha_i + \cdots + \alpha_{k}, \ 1\le i\le k \le r  \\\hline
\multirow{2}{*}{$B_r$} & \beta_{i,k}= \alpha_i + \cdots + \alpha_{k}, \ 1\le i\le k \le r \\
& \gamma_{i,k} =\alpha_i + \cdots + \alpha_{k-1}+2 \alpha_{k}+ \cdots + 2 \alpha_r ,\ 1\le i < k \le r \\\hline
\multirow{2}{*}{$C_r$} & \beta_{i,k}= \alpha_i + \cdots + \alpha_{k}, \ 1\le i\le k \le r-1 \\
& \gamma_{i,k}= \alpha_i + \cdots + \alpha_{r-1}+\alpha_r + \alpha_{r-1} + \cdots + \alpha_k,\ 1\le i \le k \le r \\\hline
\multirow{3}{*}{$D_r$} & \beta_{i,k}= \alpha_i + \cdots + \alpha_{k}, \ 1\le i\le k \le r-1 \\
& \beta_{i,r}=\alpha_i + \cdots + \alpha_{r-2} + \alpha_r,\ 1\le i \le r-1\\
& \gamma_{i,k}=\alpha_i + \cdots + \alpha_{r-1}+ \alpha_{r} + \alpha_{r-2}+ \cdots + \alpha_k,\ 1\le i < k \le r-1\\\hline
G_2 & \alpha_1, \ \alpha_1+\alpha_2,\ 2\alpha_1+\alpha_2,\ 3\alpha_1+\alpha_2,\ 3\alpha_1+2\alpha_2,\ \alpha_2\\\hline
\end{array}
\]
\caption{Positive roots listed by type.}\label{tab:root}
\end{table}

\begin{table}[t]
\doublespacing
\[
\begin{array}{|c|c|}\hline
X_r & \Phi(X_r) \\\hline
\multirow{1}{*}{$A_r$} & \beta_{i,k}= \epsilon_i-\epsilon_{k+1}, \ 1\le i\le k \le r  \\\hline
\multirow{3}{*}{$B_r$} & \beta_{i,k}= \epsilon_i-\epsilon_{k+1}, \ 1\le i\le k \le r-1 \\
& \beta_{i,r} = \epsilon_i, \ 1\le i \le r\\
& \gamma_{i,k} = \epsilon_i+\epsilon_k ,\ 1\le i < k \le r \\\hline
\multirow{2}{*}{$C_r$} & \beta_{i,k}= \epsilon_i-\epsilon_{k+1}, \ 1\le i\le k \le r-1 \\
& \gamma_{i,k}= \epsilon_i+\epsilon_k,\ 1\le i \le  k \le r  \\\hline
\multirow{3}{*}{$D_r$} & \beta_{i,k}= \epsilon_i-\epsilon_{k+1}, \ 1\le i\le k \le r-1 \\
& \beta_{i,r}= \epsilon_i+\epsilon_r,\ 1\le i \le r-1\\
& \gamma_{i,k}= \epsilon_i+\epsilon_k,\ 1\le i < k \le r-1\\\hline
\multirow{2}{*}{$G_2$} & \epsilon_1-\epsilon_2, \ -\epsilon_1+\epsilon_3,\ -\epsilon_2+\epsilon_3,\\ 
& \epsilon_1-2\epsilon_2+\epsilon_3,\ -\epsilon_1-\epsilon_2+2\epsilon_3,\ -2\epsilon_1+\epsilon_2+\epsilon_3 \\\hline
\end{array}
\]
\singlespacing
\caption{The canonical realization of positive roots listed by type, following \cite{bourbaki}.}\label{tab:canonical}
\end{table}

\underline{Type $A_r$:} We see from Table \ref{tab:root} that $\Phi^+ = \{ \beta_{i,k} : 1 \le i \le k \le r \}$, so an element $\bm\alpha \in \R^+$ can be written as $\bm\alpha = \sum c_{i,k} (\beta_{i,k})$.  Define $\Upsilon \colon \R^+ \longrightarrow \TT(\infty)$ by setting $\Upsilon (\sum c_{i,k} (\beta_{i,k}))$ to be the tableaux such that \[ \ell_{i,k+1}(T) = c_{i,k} ,  \qquad  1 \le i \le k \le r ,\] where we write $T=\Upsilon (\sum c_{i,k} (\beta_{i,k}))$. Then it is straightforward to check that $\Upsilon$ and $\Xi$ are inverse to each other, and we also obtain that $\seg(T)$ is the number of $(\beta_{i,k})$'s with nonzero coefficient $c_{i,k}$, which is exactly the number of distinct parts of $\bm\alpha$.

\underline{Type $C_r$:}  From Table \ref{tab:root}, we have $\Phi^+ = \{ \beta_{i,k} : 1 \le i \le k \le r-1 \} \cup \{\gamma_{i,k} : 1 \le i \le k \le r \}$. Write an element $\bm\alpha \in \R^+$ as $\bm\alpha= \sum c_{i,k} (\beta_{i,k}) + \sum d_{i,k} (\gamma_{i,k})$, and define $\Upsilon(\bm\alpha)$ to be the tableau $T$ such that 
\[ \ell_{i, k+1}(T) = c_{i,k} \qquad \text{ and } \qquad \ell_{i, \overline k}(T) = d_{i,k} .\]
Then $\Upsilon$ is the inverse of $\Xi$, and $\seg(T)$ is the number of distinct parts in $\Xi(T)$.

\underline{Type $B_r$:}  In this case, for each $i\in I$, both the $0$-segment and  $\overline\imath$-segment in the $i$th row contribute the same part $(\beta_{i,r})$ from the definition of $\Xi$ above. Since we have $\Phi^+ = \{ \beta_{i,k} : 1 \le i \le k \le r \} \cup \{\gamma_{i,k} : 1 \le i < k \le r \}$, write an element $\bm\alpha \in \R^+$ as $\bm\alpha= \sum c_{i,k} (\beta_{i,k}) + \sum d_{i,k} (\gamma_{i,k})$. Define $\Upsilon(\bm\alpha)$ to be the tableau $T$ such that 
\[ 
\ell_{i, k+1}(T) = c_{i,k},  \,  
\ell_{i, \overline k}(T) = d_{i,k} , \, 
\ell_{i, \overline \imath} (T)= \left\lfloor\frac{c_{i,r}}{2}\right\rfloor  
\text{ and } 
\ell_{i,0}(T) = 
\begin{cases} 
0 & \text{if $c_{i,r}$ is even}, \\ 
1 & \text{otherwise}, 
\end{cases} 
\]
where $\lfloor n \rfloor$ denotes the largest integer less than or equal to $n$.

In order to see that $\Xi$ and $\Upsilon$ are inverse to each other, it is enough to consider $0$-segment and $\overline \imath$-segment in the $i$th row and the corresponding partition $c_{i,r}(\beta_{i,r})$. Assume that $T$ has only possibly  a $0$-segment and $\overline \imath$-segment in the $i$th row. Then $\Xi(T)= c_{i,r}(\beta_{i,r})$, where $c_{i,r}=2 \ell_{i, \overline \imath}(T) + \ell_{i,0}(T)$ with $\ell_{i,0}(T) = 0$ or $1$. Now we see that 
\[
\ell_{i,\overline \imath} \left (\Upsilon(\Xi(T)) \right ) = \left\lfloor\frac{c_{i,r}}{2}\right\rfloor = \left\lfloor \frac{2\ell_{i, \overline \imath}(T) + \ell_{i,0}(T)}{2} \right\rfloor  = \ell_{i,\overline \imath}(T)
\] 
and $\ell_{i,0} \left (\Upsilon(\Xi(T)) \right ) = \ell_{i, 0}(T)$. Thus we have $\Upsilon(\Xi(T))=T$.
Next we consider a partition $\bm\alpha=c_{i,r}(\beta_{i,r})$. Then we have 
\[
\Xi(\Upsilon(\bm\alpha))= \left(2 \left\lfloor\frac{c_{i,r}}{2}\right\rfloor + \overline{c_{i,r}}\right) (\beta_{i,r}) = c_{i,r}(\beta_{i,r})=\bm\alpha,
\]
where $\overline{c_{i,r}}=0$ if $c_{i,r}$ is even, or $\overline{c_{i,r}}=1$ otherwise.
Hence $\Upsilon$ is the inverse of $\Xi$. 

Furthermore, since $e_B(T)$ counts the number of rows $i$ such that both $\ell_{i, \overline \imath}(T)$ and $\ell_{i,0}(T)$ are nonzero, it is now clear that $\seg(T) =\seg'(T) -e_B(T)$ is the number of distinct parts in $\Xi(T)$. 

\underline{Type $D_r$:} In this type, we have $\Phi^+ = \{ \beta_{i,k} : 1 \le i \le k \le r-1 \} \cup \{ \beta_{i,r} : 1 \le i \le r-1 \} \cup \{\gamma_{i,k} : 1 \le i < k \le r-1 \}$. We need to pay attention to an $x_i$-segment ($x_i\in\{r,\overline r\}$) and $\overline \imath$-segment in the $i$th row, since these segments do not exactly match up with the corresponding parts $(\beta_{i,r-1})$ and $(\beta_{i,r})$. We write an element $\bm\alpha \in \R^+$ as $\bm\alpha= \sum c_{i,k} (\beta_{i,k}) + \sum d_{i,k} (\gamma_{i,k})$. Define $\Upsilon(\bm\alpha)$ to be the tableau $T$ such that 
\begin{align*} 
\ell_{i, k+1}(T) &= c_{i,k} \text{ for } 1 \le i \le k  \le r-2, &
\ell_{i, \overline k}(T) &= d_{i,k}  \text{ for } 1 \le i < k  \le r-1, \\
\ell_{i, r}(T) &= \max (0, c_{i, r-1} - c_{i,r}), &  \ell_{i,\overline r}(T) &= \max (0, c_{i, r} - c_{i,r-1}), \\
\ell_{i, \overline \imath} (T) &= \min (c_{i,r-1}, c_{i,r}).
\end{align*}

In order to see that $\Upsilon$ is the inverse of $\Xi$, it is enough to consider an $x_i$-segment ($x_i\in\{r,\overline r\}$) and $\overline \imath$-segment in the $i$th row and the corresponding partition $c_{i,r-1}(\beta_{i, r-1})+c_{i,r}(\beta_{i,r})$. 
Recall that, by definition, an $r$-segment and $\overline r$-segment cannot simultaneously appear in the same row of $T$. 
Assume that $T$ has only possibly  an $r$-segment and $\overline \imath$-segment in the $i$th row. Then $\Xi(T)= c_{i,r-1}(\beta_{i, r-1})+c_{i,r}(\beta_{i,r})$, where $c_{i,r-1}=\ell_{i,r}(T)+\ell_{i, \overline \imath}(T)$, $c_{i,r}=\ell_{i, \overline \imath}(T)$ and $c_{i,r-1} \ge c_{i,r}$. Write $T'=\Upsilon(\Xi(T))$. Then 
\begin{align*}
\ell_{i,r}(T') &= \max (0, c_{i, r-1} - c_{i,r})= c_{i,r-1}-c_{i,r}= \ell_{i,r}(T)\\
\ell_{i,\overline r}(T') &= \max (0, c_{i, r} - c_{i,r-1})= 0= \ell_{i, \overline r}(T),\\
\ell_{i, \overline \imath} (T')&=\min (c_{i,r-1}, c_{i,r})= c_{i,r}=\ell_{i, \overline \imath}(T).
\end{align*} 
Thus $T'=T=\Upsilon\bigl(\Xi(T)\bigr)$.

Next we assume that $\bm\alpha=c_{i,r-1}(\beta_{i, r-1})+c_{i,r}(\beta_{i,r})$ with $c_{i,r-1} \ge c_{i,r}$. Then $\Upsilon(\bm\alpha)$ does not have $\overline r$-segment, since $\ell_{i, \overline r}(\Upsilon(\bm\alpha))=0$. And we have 
\begin{align*}
\Xi(\Upsilon(\bm\alpha))&= \bigl(\max(0, c_{i, r-1} -c_{i,r})+ \min(c_{i,r-1}, c_{i,r})\bigr) (\beta_{i,r-1}) + \min(c_{i,r-1}, c_{i,r}) (\beta_{i,r}) \\ &= c_{i,r-1}(\beta_{i,r-1}) +c_{i,r}(\beta_{i,r}) =\bm\alpha.
\end{align*}
Thus $\Upsilon$ is the inverse of $\Xi$ in this case. The other case where $T$ has only possibly  an $\overline r$-segment and $\overline \imath$-segment in the $i$th row can be proved similarly.

Since $e_D(T)$ counts the number of rows $i$ that contain $\overline \imath$-segment but neither $r$- or $\overline r$-segment, we can see that $\seg(T) =\seg'(T)+e_D(T)$ is the number of distinct parts in $\Xi(T)$.

\underline{Type $G_2$:}  This is similar to the type $B_2$ case proved above, so we skip the details.
\end{proof}

Define a map $\pr\colon \R^+ \longrightarrow Q^-$ to be the (negative of the) canonical projection; i.e., 
\[
\pr \Big(\sum_{(\alpha)\in R} c_{(\alpha)} (\alpha)\Big) = -\sum_{\alpha\in\Phi^+} c_{(\alpha)} \alpha \in Q^-.
\]
Then define the map $\wt \colon \TT(\infty) \longrightarrow Q^-$ to be $\wt=\pr \circ \Xi$. This is the same function $\wt$ for the crystal structure on $\TT(\infty)$ defined by J.\ Hong and H.\ Lee.

It is well-known that for any $\ii=(i_1, i_2, \dots , i_N) \in R(w_\circ)$, we can write elements of $\Phi^+$ as
\begin{equation} \label{eqn-be}
\beta_1 = \alpha_{i_1}, \ \ \beta_2 = s_{i_1}(\alpha_{i_2}), \ \ \dots, \ \ \beta_N = s_{i_1}\cdots s_{i_{N-1}}(\alpha_{i_N}).
\end{equation}
To this data, Lusztig associates a PBW type basis $B_\ii$ of $U^-_q(\mathfrak g)$ consisting of the elements of the form
\[ 
f_{\ii}^\cc = f^{(c_1)}_{\beta_1} \cdots f^{(c_N)}_{\beta_N},
\]
where $\cc=(c_1, c_2, \dots , c_N) \in \ZZ_{\ge0}^N$, 
\[
f_{\beta_j}^{(c_j)} = T_{i_1,-1}''\cdots T_{i_{j-1},-1}''(f_{i_j}^{(c_j)}),
\]
and $f_i^{(c)}$ is the $c$th divided power of $f_i$.
(See Section 37.1.3 and Chapter 40 of \cite{Luszt:93} for more details, including the definition of $T_{i,-1}''$.)
The $\ZZ [q]$-span $\mathscr{L}$ of $B_\ii$ is independent of $\ii$. Let $\pi\colon \mathscr{L} \longrightarrow \mathscr{L}/ q \mathscr{L}$ be the natural projection. The image $\pi (B_\ii)$ is also independent of $\ii$; we denote it by $B$. The restriction of $\pi$ to $\mathscr{L} \cap \overline{\mathscr{L}}$ is an isomorphism of $\ZZ$-modules $\overline\pi\colon \mathscr{L} \cap \overline{\mathscr L} \longrightarrow \mathscr{L}/ q \mathscr{L}$, where $\overline{\phantom{a}}$ is the bar involution of $U_q(\mathfrak g)$ fixing the generators $e_i$ and $f_i$, for all $i\in I$, and sending $q\mapsto q^{-1}$.  Then the preimage $\CB = \overline\pi^{-1}(B)$ is a $\QQ(q)$-basis of $U^-_q(\mathfrak g)$, called the {\it canonical basis}.
For $\ii\in R(w_\circ)$, define a map $\phi_\ii \colon \CB \longrightarrow \ZZ_{\ge0}^N$ by setting $\phi_\ii(b) = \cc$, where $\cc \in \ZZ_{\ge0}^N$ is given by 
\[
b \equiv f_\ii^\cc \bmod q \mathscr L.
\]
Then $\phi_\ii$ is a bijection. Define $\wt(b) = - \sum_{j=1}^N c_j\beta_j \in Q^-$, and define $\nz(\phi_\ii(b))$ to be the number of nonzero $c_j$'s for $\phi_\ii(b)=(c_1, \dots , c_N)$ and $b \in \CB$.

Now we can state the main theorem of this paper.

\begin{theorem} \label{main}
Let $\mathfrak g$ be a Lie algebra of type $A_r, B_r, C_r, D_r$ or $G_2$ and fix a long word $\ii \in R(w_\circ)$.  Then we have 
\begin{align} 
\prod_{\alpha\in\Phi^+} \frac{1-t^{-1}\zz^\alpha}{1-\zz^\alpha} &= \sum_{b\in \CB} (1-t^{-1})^{\nz(\phi_\ii(b))}\zz^{-\wt(b)}  \label{eqn-nu}\\ 
&= \sum_{T\in \TT(\infty) } (1-t^{-1})^{\seg(T)}\zz^{-\wt(T)}. \nonumber
\end{align}
\end{theorem}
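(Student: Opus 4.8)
The plan is to derive the second equality from the first, using Proposition \ref{prop:bijsegroot} as the bridge between tableaux and Kostant partitions. The first equality is exactly equation \eqref{eqn-aa}, which is the theorem of Kim and Lee \cite{KL:11} and may be taken as given. So the entire task reduces to showing that the sum over $b\in\CB$ equals the sum over $T\in\TT(\infty)$, term by term after an appropriate matching of indices. The natural strategy is to exhibit a bijection $\Theta\colon\TT(\infty)\longrightarrow\CB$ such that, for each $T\in\TT(\infty)$ with $b=\Theta(T)$, we have both $\seg(T)=\nz(\phi_\ii(b))$ and $\wt(T)=\wt(b)$; summing the identical summands then yields the result.

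First I would assemble the bijection from the pieces already constructed. Proposition \ref{prop:bijsegroot} gives a bijection $\Xi\colon\TT(\infty)\longrightarrow\R^+$, and the map $\pr=$ (negative canonical projection) sends a Kostant partition to its weight in $Q^-$. What remains is a bijection between $\R^+$ and $\CB$. For this I would invoke the standard fact that, once a long word $\ii\in R(w_\circ)$ is fixed, the Lusztig parametrization $\phi_\ii\colon\CB\longrightarrow\ZZ_{\ge0}^N$ is a bijection, and that $\ZZ_{\ge0}^N$ is naturally identified with $\R^+$ by sending $\cc=(c_1,\dots,c_N)$ to $\sum_{j=1}^N c_j(\beta_j)$, where the $\beta_j$ are the positive roots enumerated as in \eqref{eqn-be}. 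Composing, I obtain a bijection $\R^+\longrightarrow\CB$ and hence $\Theta=\phi_\ii^{-1}\circ(\text{this identification})^{-1}\circ\Xi$ as the composite indicated in the diagram \eqref{diagram}. This is where the phrase ``natural bijection from the set of Kostant partitions to Lusztig's canonical basis'' in the introduction is made precise.

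Next I would verify the two numerical matchings. For the weight, the identification above gives $\wt(b)=-\sum_j c_j\beta_j$, which is exactly $\pr$ applied to the corresponding Kostant partition; since $\wt=\pr\circ\Xi$ by definition, we get $\wt(T)=\wt(\Theta(T))$ directly. For the segment count, the key observation is that $\nz(\phi_\ii(b))$ equals the number of nonzero $c_j$, i.e.\ the number of distinct parts of the Kostant partition $\Xi(T)$; and Proposition \ref{prop:bijsegroot} asserts precisely that $\seg(T)$ equals the number of distinct parts of $\Xi(T)$. Thus $\seg(T)=\nz(\phi_\ii(b))$, and the two exponents agree.

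The main obstacle I anticipate is the identification of $\R^+$ with $\CB$ being independent of the chosen long word $\ii$ in the way the statement needs. The set of parts of a Kostant partition is an intrinsic, $\ii$-free notion, whereas $\phi_\ii$ and the enumeration $\beta_1,\dots,\beta_N$ genuinely depend on $\ii$. The number of nonzero coordinates $\nz(\phi_\ii(b))$ could a priori vary with $\ii$, so I would need to confirm that both sides of \eqref{eqn-nu} are well-defined for every fixed $\ii$ and that the matching of summands respects the chosen $\ii$. The clean way to handle this is to fix $\ii$ once and for all, use it to define the bijection $\ZZ_{\ge0}^N\cong\R^+$, and observe that the count of distinct parts coincides with the count of nonzero coordinates \emph{for that $\ii$}; since Proposition \ref{prop:bijsegroot} delivers $\seg(T)$ as an $\ii$-independent quantity equal to the number of distinct parts, the equality holds for each choice of $\ii$ simultaneously. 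With these checks in place, the two sums are identical summand-by-summand and the theorem follows.
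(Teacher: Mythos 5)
Your proposal is correct and follows essentially the same route as the paper: the paper also fixes $\ii$, defines $\Psi_\ii\colon\CB\to\R^+$ by $b\mapsto\sum_j c_j(\beta_j)$ (your identification of $\ZZ_{\ge0}^N$ with $\R^+$ composed with $\phi_\ii$), and sets $\Theta_\ii=\Psi_\ii^{-1}\circ\Xi$, then matches weights via $\wt=\pr\circ\Xi=\pr\circ\Psi_\ii\circ\Theta_\ii$ and exponents via the fact that both $\seg(T)$ and $\nz(\phi_\ii(\Theta_\ii(T)))$ equal the number of distinct parts of $\Xi(T)$. Your explicit handling of the $\ii$-dependence is a sound reading of what the paper leaves implicit by working with a fixed long word throughout.
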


\begin{proof}
The first equality is Proposition 1.4 in \cite{KL:11}, so we need only prove the second equality.
Since a long word $\ii \in R(w_\circ)$ is fixed, the positive roots $\beta_1, \dots, \beta_N$ are determined as in \eqref{eqn-be}.
Define a map $\Psi_\ii\colon \CB \longrightarrow \R^+$ by
\[ 
\Psi_\ii(b)= \sum_{j=1}^N c_j (\beta_j),
\]
where $\phi_\ii(b)= (c_1, \dots , c_N)$.
Since the weight space decomposition of $U^-_q(\mathfrak g)$ is preserved under the classical limit $q \rightarrow 1$ (see, e.g., \cite{HK:02}), the theory of Kostant partitions for the negative part $U^-(\mathfrak g)$ of the universal enveloping algebra $U(\mathfrak g)$ tells us that $\Psi_\ii$ is a bijection. Moreover, $\nz(\phi_\ii(b))$ is the same as the number of distinct parts in $\Psi_\ii(b)$ by construction. 
So we have the following diagram.
\begin{equation}\label{diagram}
\begin{tikzpicture}[baseline]
\matrix (m) [matrix of math nodes, row sep=.4in, column sep=.6in, text height=1.5ex, text depth=0.25ex]
 {\TT(\infty) && \CB\\
  & \R^+ & \\};
\path[->,font=\scriptsize]
 (m-1-1) edge node[above]{$\Theta_\ii=\Psi_\ii^{-1}\circ\Xi$} (m-1-3)
 (m-1-1) edge node[below left]{$\Xi$} (m-2-2)
 (m-1-3) edge node[below right]{$\Psi_\ii$} (m-2-2)
  ;
\end{tikzpicture}
\end{equation}
By Proposition \ref{prop:bijsegroot}, the map $\Theta_\ii:=\Psi_\ii^{-1}\circ \Xi$ defines a bijection between $\TT(\infty)$ and $\CB$. Since $\wt=\pr \circ \Psi_\ii$ on $\CB$, we have 
\begin{equation} \label{eqn-wt}
\wt (\Theta_\ii(T))= (\pr \circ \Psi_\ii \circ \Theta_\ii) (T)= (\pr \circ \Xi) (T) = \wt (T). 
\end{equation} 
We also have 
\begin{equation} \label{eqn-same}
\seg(T) = \nz(\Theta_\ii(T))
\end{equation}
for all $T\in\TT(\infty)$, since each of $\seg(T)$ and $\nz(\Theta_\ii(T))$ is equal to the number of distinct parts of $\Xi(T)$ by Proposition \ref{prop:bijsegroot} and the observation made above.
Finally, applying the bijection $\Theta_\ii$ to \eqref{eqn-nu}, we replace $\CB$ with $\TT(\infty)$, $\nz(\phi_\ii(b))$ with $\seg(T)$, and $\wt(b)$ with $\wt(T)$ to complete the proof.
\end{proof}

We have obtained the following corollary, which is of its own interest.

\begin{corollary} \label{cor:last}
For each $\ii \in R(w_\circ)$, the map $\Theta_\ii\colon \TT(\infty)\longrightarrow \CB$ is a bijection such that 
\begin{align*}
\wt(T)  &= \wt (\Theta_\ii(T)), \\
\seg(T) &= \nz(\phi_\ii(\Theta_\ii(T))).
\end{align*}
\end{corollary}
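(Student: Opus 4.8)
The plan is to recognize that this corollary merely extracts and records the structural facts already established during the proof of Theorem \ref{main}; consequently the argument should amount to reassembling those facts rather than proving anything new. First I would recall the definition $\Theta_\ii = \Psi_\ii^{-1}\circ\Xi$. Proposition \ref{prop:bijsegroot} gives that $\Xi\colon\TT(\infty)\to\R^+$ is a bijection, and the Kostant-partition argument inside the proof of Theorem \ref{main} gives that $\Psi_\ii\colon\CB\to\R^+$ is a bijection for the fixed $\ii\in R(w_\circ)$. Since a composite of bijections is a bijection, $\Theta_\ii$ is a bijection $\TT(\infty)\to\CB$, which establishes the first claim.

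For the weight identity I would use that $\wt=\pr\circ\Psi_\ii$ on $\CB$ and $\wt=\pr\circ\Xi$ on $\TT(\infty)$. Because $\Psi_\ii\circ\Theta_\ii=\Xi$, one computes
\[
\wt(\Theta_\ii(T)) = (\pr\circ\Psi_\ii\circ\Theta_\ii)(T) = (\pr\circ\Xi)(T) = \wt(T),
\]
which is precisely equation \eqref{eqn-wt}. For the segment identity I would invoke two independent computations of the same integer: by Proposition \ref{prop:bijsegroot}, $\seg(T)$ equals the number of distinct parts of $\Xi(T)$; and by the construction of $\Psi_\ii$ together with $\Psi_\ii(\Theta_\ii(T))=\Xi(T)$, the quantity $\nz(\phi_\ii(\Theta_\ii(T)))$ counts the nonzero coefficients of $\Psi_\ii(\Theta_\ii(T))=\Xi(T)$, i.e.\ again the number of distinct parts of $\Xi(T)$. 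Equating the two yields $\seg(T)=\nz(\phi_\ii(\Theta_\ii(T)))$, which is equation \eqref{eqn-same}.

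Since every ingredient has already been verified inside the proof of Theorem \ref{main}, I do not expect a genuine obstacle here; the only point requiring care is notational bookkeeping. In particular, one must read the shorthand $\nz(\Theta_\ii(T))$ appearing in that proof as $\nz(\phi_\ii(\Theta_\ii(T)))$, and one should confirm that ``number of distinct parts'' on the $\R^+$ side matches ``number of nonzero entries of the Lusztig parametrization'' on the $\CB$ side under the identification afforded by $\Psi_\ii$. Making these matchings explicit is essentially the whole content of the proof.
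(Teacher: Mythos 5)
Your proposal is correct and follows essentially the same route as the paper, which presents this corollary as a direct byproduct of the proof of Theorem \ref{main}: the bijectivity of $\Theta_\ii = \Psi_\ii^{-1}\circ\Xi$ from diagram \eqref{diagram}, the weight identity \eqref{eqn-wt}, and the segment identity \eqref{eqn-same} are exactly the three ingredients you reassemble. Your added care in reading $\nz(\Theta_\ii(T))$ as $\nz(\phi_\ii(\Theta_\ii(T)))$ and in matching distinct parts of $\Xi(T)$ with nonzero Lusztig coordinates is a faithful (and slightly more explicit) rendering of what the paper does implicitly.
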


\begin{remark}
The map $\Theta_\ii$ is not a crystal isomorphism in general.
\end{remark}

\begin{ex}
Consider $T$ from Example \ref{ex:bseg} and choose $\ii= (3,2,3,2,1,2,3,2,1)$. Then we have
\[ 
\begin{array}{c} 
\beta_1= \alpha_3,\ 
\beta_2= \alpha_2+2\alpha_3,\ 
\beta_3= \alpha_2+\alpha_3,\ 
\beta_4=\alpha_2 ,\ 
\beta_5= \alpha_1+2\alpha_2+2\alpha_3 ,\\ 
\beta_6=\alpha_1+\alpha_2+2\alpha_3 ,\ 
\beta_7= \alpha_1+\alpha_2+\alpha_3 , \ 
\beta_8= \alpha_1+\alpha_2 ,\ 
\beta_9= \alpha_1 . 
\end{array}
\]
From Example \ref{ex:btheta}, we obtain
\[ 
\phi_{\ii}(\Theta_\ii(T)) = (3, 0, 4,2, 0, 1, 7, 0,2),
\] 
and see that $\seg (T) = \nz(\phi_\ii(\Theta_\ii(T))) =6$.
\end{ex}

\begin{figure}[t]
\centering
\begin{tikzpicture}[xscale=2.5,yscale=2.5]
\node (0) at (0,0) {$1$};
\node (1) at (-1,-1) {$1-t^{-1}$};
\node (2) at (0,-1)  {$1-t^{-1}$};
\node (3) at (1,-1)  {$1-t^{-1}$};
\node (11) at (-2,-2) {$1-t^{-1}$};
\node (12) at (-1,-2.1) {$1-t^{-1}$};
\node (13) at (-.65,-1.9){$(1-t^{-1})^2$};
\node (21) at (-1.35,-1.9){$(1-t^{-1})^2$};
\node (22) at (0,-2.1)  {$1-t^{-1}$};
\node (23) at (.8,-2.1) {$1-t^{-1}$};
\node (32) at (1,-1.9)  {$(1-t^{-1})^2$};
\node (33) at (2,-2)  {$1-t^{-1}$};
\path[->,font=\scriptsize,inner sep=1]
	(0) edge node[midway,fill=white]{$1$} (1.90)
	    edge node[midway,fill=white]{$2$} (2.90)
	    edge node[midway,fill=white]{$3$} (3.90)
    (3) edge node[near start,fill=white]{$1$} (13.70)
        edge node[midway,fill=white]{$2$} (32.90)
        edge node[midway,fill=white]{$3$} (33.120)
    (2) edge node[near start,fill=white]{$1$} (21.90)
        edge node[midway,fill=white]{$2$} (22.90)
        edge node[midway,fill=white]{$3$} (23.150)
	(1) edge node[midway,fill=white]{$1$} (11.90)
        edge node[midway,fill=white]{$2$} (12.90)
        edge node[midway,fill=white]{$3$} (13.110);
\end{tikzpicture}
\caption{The coefficients for the top part of $\mathcal{T}(\infty)$ in type $B_3$. Compare with Figure \ref{fig:B3}.}
\label{fig:B3t}
\end{figure}

\begin{figure}[t]
\centering
\begin{tikzpicture}[yscale=1.75,xscale=1.8]
\node (0) at (0,0) {$1$};
\node (11) at (-1,-1) {$1-t^{-1}$};
\node (12) at (1,-1) {$1-t^{-1}$};
\node (21) at (-2,-2) {$1-t^{-1}$};
\node (22) at (-.45,-2) {$1-t^{-1}$};
\node (23) at (.45,-2) {$(1-t^{-1})^2$};
\node (24) at (2,-2) {$1-t^{-1}$};
\node (31) at (-3,-3) {$1-t^{-1}$};
\node (32) at (-1.75,-3) {$(1-t^{-1})^2$};
\node (33) at (-1.35,-2.75) {$1-t^{-1}$};
\node (34) at (-.8,-3) {$(1-t^{-1})^2$};
\node (35) at (.8,-3) {$(1-t^{-1})^2$};
\node (36) at (1.75,-3) {$(1-t^{-1})^2$};
\node (37) at (3,-3) {$1-t^{-1}$};
\path[->,font=\scriptsize,inner sep=1]
 (0) edge node[midway,fill=white]{$1$} (11)
 (0) edge node[midway,fill=white]{$2$} (12)
 (11) edge node[midway,fill=white]{$1$} (21)
 (11) edge node[midway,fill=white]{$2$} (22)
 (12) edge node[midway,fill=white]{$1$} (23)
 (12) edge node[midway,fill=white]{$2$} (24)
 (21) edge node[midway,fill=white]{$1$} (31)
 (21) edge node[midway,fill=white]{$2$} (32)
 (22) edge node[midway,fill=white]{$1$} (33.90)
 (22) edge node[midway,fill=white]{$2$} (35.90)
 (23) edge node[midway,fill=white]{$1$} (34.90)
 (23) edge node[midway,fill=white]{$2$} (36.95)
 (24) edge node[midway,fill=white]{$1$} (36.85)
 (24) edge node[midway,fill=white]{$2$} (37);
\end{tikzpicture}
\caption{The coefficients for the top part of $\mathcal{T}(\infty)$ in type $G_2$.  Compare with Figure \ref{fig:G2}.}
\label{fig:G2t}
\end{figure}

\section{Applications}\label{sec:app}

Throughout this section, we let $q$ be a formal indeterminate.  In \cite{Luszt:83}, Lusztig defined a $q$-analogue of Kostant's partition function as follows.  For $\mu\in P$, set
\[
\PP(\mu;q) := \sum_{\substack{(c_1,\dots,c_N)\in\ZZ_{\ge0}^N \\ \mu=c_1\beta_1+\cdots+c_N\beta_N}} q^{c_1+\cdots+c_N},
\]
where $\{\beta_1,\dots,\beta_N\} = \Phi^+$.  Note that $\PP(\mu;q) =0$ if $\mu\notin Q^+$.  We immediately obtain a way to write this $q$-analogue as a sum over $\TT(\infty)$.  

\begin{definition}
For $T \in \TT(\infty)$, define $|T|$ to be the number of boxes in $T^\sharp$, counting  a box $\boxed{\overline\imath}$ in the $i$th row, if any,  with multiplicity $2$ for each $i$.
\end{definition}

\begin{proposition} \label{prop:1117}
For $\mu\in Q^+$, we have
\[
\PP(\mu;q) = \sum_{\substack{T\in\TT(\infty)\\ -\wt(T)=\mu}} q^{|T|}.
\]
\end{proposition}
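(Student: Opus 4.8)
The plan is to exploit the bijection $\Xi\colon\TT(\infty)\to\R^+$ from Proposition \ref{prop:bijsegroot} to translate the sum over tableaux into a sum over Kostant partitions, and then match it term-by-term with Lusztig's $q$-analogue. First I would observe that, for a fixed long word $\ii\in R(w_\circ)$, the definition of $\PP(\mu;q)$ is precisely a sum over all tuples $(c_1,\dots,c_N)\in\ZZ_{\ge0}^N$ with $\sum_j c_j\beta_j=\mu$, each weighted by $q^{c_1+\cdots+c_N}$. Since $\{\beta_1,\dots,\beta_N\}=\Phi^+$, such a tuple is exactly the datum of a Kostant partition $\bm\alpha=\sum_{(\alpha)\in R}c_{(\alpha)}(\alpha)\in\R^+$ with $\pr(\bm\alpha)=-\mu$, and the total exponent $c_1+\cdots+c_N$ is simply the sum of all coefficients of $\bm\alpha$. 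Thus $\PP(\mu;q)$ can be rewritten as $\sum_{\bm\alpha\in\R^+,\ \pr(\bm\alpha)=-\mu} q^{\,\sum c_{(\alpha)}}$.

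Next I would use the bijection $\Xi$ to replace the sum over $\R^+$ by a sum over $\TT(\infty)$. Because $\wt=\pr\circ\Xi$, the condition $\pr(\bm\alpha)=-\mu$ corresponds exactly to $\wt(T)=-\mu$ when $\bm\alpha=\Xi(T)$, which matches the indexing set on the right-hand side of the proposition. It then remains to verify that the total coefficient sum $\sum_{(\alpha)}c_{(\alpha)}$ of $\Xi(T)$ equals $|T|$, the box count of $T^\sharp$ with the stated multiplicity-$2$ convention for boxes $\boxed{\overline\imath}$ in the $i$th row.

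The main step, and the one where care is needed, is this last identity $\sum_{(\alpha)}c_{(\alpha)}=|T|$, which must be checked type-by-type from the explicit formulas defining $\Xi$. For each segment in row $i$, the rule assigns a coefficient equal either to the segment length $\ell_{i,k}(T)$ (contributing one unit per box) or, in the special cases, to $2\ell_{i,\overline\imath}(T)$ for an $\overline\imath$-segment in types $B_r$ and $G_2$ and to $\ell_{i,r}(T)+\ell_{i,\overline r}(T)$ spread across two roots in type $D_r$; the $0$-box contributes a single unit, and a box $\boxed{\overline\imath}$ accordingly contributes $2$. A quick inspection shows these coefficients sum precisely to the number of variable boxes counted with the prescribed multiplicities, i.e.\ to $|T|$. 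I expect the bookkeeping in types $B_r$, $D_r$, and $G_2$ to be the only delicate part, since there the correspondence between segments and roots is not one-to-one; in types $A_r$ and $C_r$ the identity is immediate because each box contributes exactly one to a single coefficient. Combining the three matchings—indexing set via $\wt=\pr\circ\Xi$, bijectivity of $\Xi$, and the exponent identity $\sum c_{(\alpha)}=|T|$—yields the claimed formula.
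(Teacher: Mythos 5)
Your strategy---rewrite $\PP(\mu;q)$ as a sum over Kostant partitions, transport it through the bijection $\Xi$ of Proposition \ref{prop:bijsegroot} using $\wt=\pr\circ\Xi$, and check that the coefficient sum of $\Xi(T)$ equals $|T|$---is precisely the paper's proof. The gap is in your type-$C_r$ verification of that last identity, where your two claims contradict each other: you assert that a box $\boxed{\overline\imath}$ ``accordingly contributes $2$,'' and also that in type $C_r$ ``each box contributes exactly one to a single coefficient.'' The second claim is the correct one: in type $C_r$ the map $\Xi$ sends a $\bi$-segment in the $i$th row to $\ell_{i,\bi}(T)\,(\gamma_{i,i})$, where $\gamma_{i,i}=2\alpha_i+\cdots+2\alpha_{r-1}+\alpha_r$ is itself a single (long) positive root, so each such box adds $1$ to $c_1+\cdots+c_N$. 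This is in genuine contrast to type $B_r$ (coefficient $2\ell_{i,\bi}$ on the part $(\beta_{i,r})$), type $D_r$ (coefficient $\ell_{i,\bi}$ on each of the two parts $(\beta_{i,r-1})$ and $(\beta_{i,r})$), and type $G_2$ (coefficient $2\ell_{1,\bon}$ on $(2\alpha_1+\alpha_2)$), where each such box really does add $2$.

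Consequently, if the doubling convention in the definition of $|T|$ is applied in type $C_r$, the identity $|T|=c_1+\cdots+c_N$ fails, and your argument breaks exactly at the step you call ``immediate.'' Concretely, in type $C_2$ take $\mu=2\alpha_1+\alpha_2$: the three tableaux of weight $-\mu$ have $\Xi$-images $(2\alpha_1+\alpha_2)$, $(\alpha_1)+(\alpha_1+\alpha_2)$, and $2(\alpha_1)+(\alpha_2)$, with coefficient sums $1,2,3$, so $\PP(\mu;q)=q+q^2+q^3$; but the doubled box counts of these tableaux are $2,2,4$, giving $2q^2+q^4$. So a complete proof must state explicitly that in type $C_r$ the $\bi$-boxes in the $i$th row are counted \emph{once}---the doubling is needed exactly in types $B_r$, $D_r$, $G_2$, where $2\epsilon_i$ is not a root---rather than waving the type-$C_r$ case through; this is also the one point where the stated definition of $|T|$ must be read type-by-type. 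A minor separate slip: in type $D_r$ the special segment is the $\bi$-segment, sent to $\ell_{i,\bi}\bigl((\beta_{i,r-1})+(\beta_{i,r})\bigr)$; the $r$- and $\overline r$-segments behave like ordinary segments, so the quantity $\ell_{i,r}(T)+\ell_{i,\overline r}(T)$ you name is not the relevant one.
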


\begin{proof}
We write $\Xi(T) = c_1(\beta_1) + \cdots + c_N(\beta_N)$, where $\{\beta_1,\dots,\beta_N\} = \Phi^+$, so that $\wt(T) = -c_1\beta_1 - \cdots - c_N\beta_N$.  Then, from the definition of $\Xi$, one can see that  $|T| = c_1+\dots+c_N$. Now the assertion is clear from the definition of $\PP(\mu;q)$.
\end{proof}

The above proposition enables us to write the Kostka-Foulkes polynomial $K_{\lambda,\mu}(q)$ ($\lambda,\mu\in P^+$)  in terms of $\TT(\infty)$. Namely, we have
\begin{align*}
K_{\lambda,\mu}(q) 
&:= \sum_{w\in W}(-1)^{\ell(w)}\PP(w(\lambda+\rho)-(\mu+\rho);q) \\
&= \sum_{w\in W}(-1)^{\ell(w)} \sum_{\substack{T\in\TT(\infty) \\ -\wt(T) = w(\lambda+\rho)-(\mu+\rho)}} q^{|T|}.
\end{align*}

For $\mu \in P$, let $W_\mu = \{ w \in W : w\mu = \mu \}$, and set $W_\mu(q) = \sum_{w\in W_\mu} q^{\ell(w)}$.
Applying Theorem \ref{main}, we can write the Hall-Littlewood function $P_\mu(\zz; q)$ as a sum over $\TT(\infty)$. That is, we have
\begin{align*}
P_{\mu}(\zz; q) 
&:= \frac{1}{W_\mu(q)}\sum_{w\in W} w \left ( \zz^{\mu} \prod_{\alpha\in\Phi^+} \frac{1-q\zz^{-\alpha}}
{1-\zz^{-\alpha} } \right ) \\
&=  \frac{1}{W_\mu(q)} \sum_{T\in\TT(\infty)} (1-q)^{\seg(T)} \sum_{w\in W} \zz^{w(\mu+\wt(T))}.
\end{align*}
Recall that $K_{\lambda, \mu}(q)$ provides a transition matrix from $P_\mu(\zz; q)$ to the character $\chi_\lambda(\zz)$. More precisely, we have
\[ \chi_\lambda(\zz) = \sum_{\substack{\mu\in P^+ \\ \mu\le\lambda}} K_{\lambda,\mu}(q) P_\mu(\zz;q) .\]
We hope to establish other applications of our formulas to the study of symmetric functions and related topics in future work.

\section{Sage implementation}
The listed second author, together with Travis Scrimshaw, have implemented $\TT(\infty)$ into Sage \cite{combinat,sage}, as well as the statistics $\seg(T)$ and $|T|$.  We conclude here with some examples using the newly developed code.

\begin{ex}
We recreate the data from Example \ref{ex:dseg}.
\begin{verbatim}
sage: Tinf = InfinityCrystalOfTableaux("D4")
sage: row1 = [1,1,1,1,1,1,1,1,1,2,2,-3,-3,-1,-1,-1]
sage: row2 = [2,2,2,2,3,-4,-3,-3]
sage: row3 = [3,-4,-3]
sage: T = Tinf(rows=[row1,row2,row3])
sage: T.weight()
-10*alpha[1] - 12*alpha[2] - 8*alpha[3] - 10*alpha[4]
sage: [T.epsilon(i) for i in T.index_set()]
[5, 0, 4, 6]
sage: [T.phi(i) for i in T.index_set()]
[-3, 4, 0, -2]
sage: T.e(1).pp()
  1  1  1  1  1  1  1  1  1  2 -3 -3 -1 -1 -1
  2  2  2  2  3 -4 -3 -3
  3 -4 -3
sage: T.f(4).pp()
  1  1  1  1  1  1  1  1  1  1  2  2 -3 -3 -1 -1 -1
  2  2  2  2  2  3 -4 -3 -3
  3 -4 -4 -3
sage: T.pp()
  1  1  1  1  1  1  1  1  1  2  2 -3 -1 -1 -1
  2  2  2  2  3 -4 -3 -3
  3 -4 -3
sage: T.reduced_form().pp()
  2  2 -3 -1 -1 -1
  3 -4 -3 -3
 -4 -3
sage: T.seg()
9
sage: T.content()
16
\end{verbatim}
The crystal graph $\TT(\infty)$ down to depth $3$ is outputted using the following commands.
\begin{verbatim}
sage: Tinf = InfinityCrystalOfTableaux("D4")
sage: S = Tinf.subcrystal(max_depth=3)
sage: G = Tinf.digraph(subset=S)
sage: view(G,tightpage=True)
\end{verbatim}
\end{ex}

\bibliography{GKformulaABCDG}{}
\bibliographystyle{amsplain}
\end{document}